\colorlet{darkgreen}{green!50!black}
\theoremstyle{theorem}
\newtheorem{theorem}{Theorem}
\newtheorem{lemma}[theorem]{Lemma}
\newtheorem{proposition}[theorem]{Proposition}
\theoremstyle{definition}
\newtheorem{definition}[theorem]{Definition}
\newtheorem{example}[theorem]{Example}
\newlength{\hatchspread}
\newlength{\hatchthickness}
\newlength{\hatchshift}
\newcommand{\hatchcolor}{}
\tikzset{hatchspread/.code={\setlength{\hatchspread}{#1}},
         hatchthickness/.code={\setlength{\hatchthickness}{#1}},
         hatchshift/.code={\setlength{\hatchshift}{#1}},
         hatchcolor/.code={\renewcommand{\hatchcolor}{#1}}}
\tikzset{hatchspread=3pt,
         hatchthickness=0.4pt,
         hatchshift=0pt,
         hatchcolor=black}
\newcommand{\nb}[1]{#1}
\newcommand{\nr}[1]{}
\author[$1$]{Jules Flin}
\author[$1$]{Sandro Franceschi}
\affil[$1$]{T\'el\'ecom SudParis, Institut Polytechnique de Paris}
\title{Reflected Brownian Motion in a wedge: 
\\ sum-of-exponential absorption probability at the vertex
\\ and differential properties}
\date{}
\begin{document}

\maketitle
\thispagestyle{empty}

\abstract{We study a Brownian motion with drift in a wedge of angle $\beta$ which is obliquely reflected on each edge along angles $\varepsilon$ and $\delta$. We assume that the classical parameter $\alpha=\frac{\delta+\varepsilon-\pi}{\beta}$ is greater than~$1$ and we focus on transient cases where the process can either be absorbed at the vertex or escape to infinity. We show that $\alpha\in\mathbb{N}$ is a necessary and sufficient condition for the absorption probability, seen as a function of the starting point, to be written as a finite sum of terms of exponential product form. In such cases, we give expressions for the absorption probability and its Laplace transform. When $\alpha\in \mathbb{Z}+\frac{\pi}{\beta}\mathbb{Z}$ we find an explicit \nb{differentially-}algebraic expression for the Laplace transform. Our results rely on Tutte's invariant method and a recursive compensation approach.}

\section{Introduction}

\paragraph{Context}

In dimension one, it is known that a standard Brownian motion with positive drift $\mu >0$ started at $u>0$ has probability $e^{-2  \mu u}$ to reach $0$. A simple way of achieving this result is to use Girsanov's theorem and the reflection principle.
In dimension 2, we consider an obliquely reflected Brownian motion in a cone with drift belonging to the interior of the cone and directions of reflection strongly oriented towards the apex of the cone. A phenomenon of competition between the reflections and the drift appears and the process is either absorbed at the vertex or escapes to infinity. Lakner, Liu, and Reed~\cite{Lak-Liu-Reed-2023} studied this absorption phenomenon and showed the existence and uniqueness of a solution to the absorbed process.
Ernst et al.~\cite{ErFrHu-20} were able to obtain a general formula for the probability of absorption at the vertex using Carleman's boundary value problems theory. In particular, they characterised the cases where this probability has an exponential product form, \textit{i.e.} when the reflection vectors are opposite. Franceschi and Raschel~\cite{franceschi_dual_2022} then generalised this result to higher dimensions by showing that the coplanarity of the reflection vectors was a necessary and sufficient condition for the absorption probability to have an exponential product form. In a sense, this condition can be seen as dual to the classical skew symmetry condition discovered by Harrison and Williams \cite{harrison_diffusion_1978,harrison_multidimensional_1987,Wi-87} which characterises cases where the stationary distribution is exponential.
In dimension 2, when the process is recurrent, Dieker and Moriarty~\cite{dieker_moriarty_2009}, preceded by Foshini~\cite{Foschini} in the symmetric case, determined a necessary and sufficient condition for the stationary distribution to be a sum of exponentials terms of product form. It is therefore very natural to look for an analogous result to the one of Dieker and Moriarty. This article aims to find, when the process is transient, a necessary and sufficient condition for the absorption probability to be a sum-of-exponentials function of the starting point and to compute this probability. We also identify other remarkable cases where the Laplace transform of the absorption probability is \nb{differentially-algebraic (D-algebraic), \textit{i.e.} solution of a polynomial equation in the function, its derivatives, and the independent variables, with coefficients in $\mathbb R$}.


\paragraph{Key parameter and main results}

To present our results in more detail, we need to introduce a few parameters usually used to define a semimartingale reflecting Brownian motion (SRBM). We define the cone 
$
C:=\{(r\cos(t),r\sin(t)) : r \geqslant 0 \text{ and } 0\leqslant t \leqslant \beta \}
\label{eq:C}
$
of angle $\beta\in (0,\pi)$ and consider $\widetilde Z_t$ an obliquely reflected standard Brownian motion with drift $\widetilde \mu\in\mathbb{R}^2$ of angle $\theta\in (-\pi,\pi]$ and reflection vectors of angles $\delta\in (0,\pi)$ and $\varepsilon\in (0,\pi)$, see Figure~\ref{fig:angles} to visualize these angles. We define
\begin{equation}
\alpha:=\frac{\delta+\varepsilon-\pi}{\beta}
\label{eq:alpha}
\end{equation} 
which is a famous key parameter in the SRBM literature. As a general rule, such a process is most of the time studied in the literature in the case where $\alpha< 1$, \textit{i.e.} in the case where the process is a semimartingale markov process, see the seminal work of Varadhan and Williams \cite{varadhan_brownian_1985,Wi-85b}. We will not give here a precise mathematical definition of the process, 
which can be found in many articles, see the survey of Williams \cite{williams_semimartingale_1995}. 
We will simply point out that it behaves like a standard Brownian motion with drift inside the cone, it is reflected in a given direction when it touches an edge (being pushed by the local time on the boundary) and it spends zero time at the vertex of the cone. The famous skew symmetric condition, where the stationary distribution has an exponential product form, corresponds to $\alpha=0$, and Dieker and Moriarty's condition for a sum-of-exponential stationary density corresponds to $\alpha \in -\mathbb{N}\cup\{0\}$. The dual skew symmetric case \cite{ErFrHu-20,franceschi_dual_2022}, where the escape probability has an exponential product form, correspond to $\alpha=1$.
For our purposes, in this article, we will assume that
\begin{equation}
\alpha\geqslant 1 
\label{eq:alphageq1}
\end{equation}  
so that the process can be trapped at the vertex and we will consider transient cases where the drift $\widetilde \mu$ belongs to the interior of the cone $C$, that is when $\theta\in (0,\beta)$. We define the first hitting time of the vertex
\begin{equation*}
T:=\inf \{ t>0 : \widetilde Z_t =0 \}.
\label{eq:T}
\end{equation*} 
The article~\cite{Lak-Liu-Reed-2023} makes a detailed study of the absorbed process, its existence, and its uniqueness in this case. As explained in the articles \cite{ErFrHu-20,franceschi_dual_2022,Lak-Liu-Reed-2023}, by following the results from Taylor and Williams \cite{TaWi-93}, when $\alpha\geqslant 1$ the process $Z_t$ is well defined until it hits the vertex at time $T$, which amounts to considering the process $(\widetilde Z_t)_{0 \leqslant t \leqslant T}$.

\begin{figure}[h]
\centering
\includegraphics[width=0.30\linewidth]{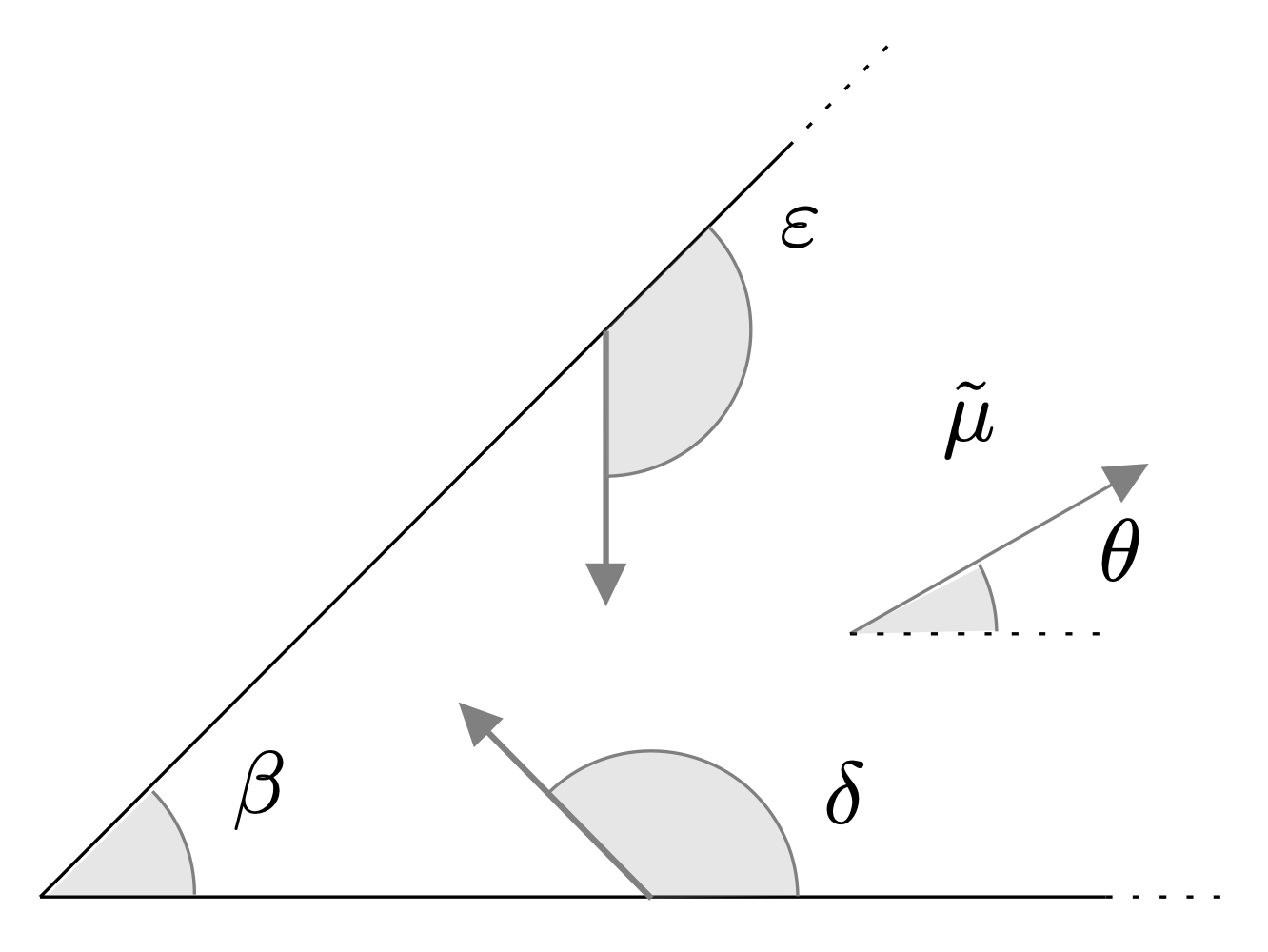}\\
\caption{Configuration of the angles used to describe the model.}
\label{fig:angles}
\end{figure}

The main results of the article are as follows. We prove that the absorption probability at the vertex $\mathbb{P}(T<\infty)$ is a sum-of-exponential function of the starting point if and only if 
\begin{equation}
\alpha\in \nb{\mathbb{N}:=\{1,2,3,\dots\}}
\label{eq:alphainN}
\end{equation}
plus the\nr{$\;\,$small technical} condition
\begin{equation}
\forall j\in\{1,\dots,2\alpha-2\}, \ \theta-2\delta+j\beta\not\equiv 0\text{ mod}(\pi)
\label{eq:polesimpleintro}
\end{equation}
which excludes cases where there are multiple poles in the Laplace transform. 
In fact, our results are much more accurate than that. Assuming that~\eqref{eq:alphainN} and~\eqref{eq:polesimpleintro} hold, if $(u,v)$ is the starting point of the process (mapped onto the quadrant, see \eqref{eq:uvtilde}) the absorption probability is of the form
\begin{equation}\label{eq:soe}
\mathbb{P}_{(u,v)}(T<\infty)=\sum_{k=1}^{2\alpha-1} c_k \exp\left(a_ku+b_kv\right),
\end{equation}

\noindent where the coefficients $a_k$, $b_k$ and $c_k$ are computed explicitly in Theorem~\ref{thm:explicitproba}.
In the cases where $\theta-2\delta+j\beta\equiv 0\text{ mod}(\pi)$ for some $j\in\{1,\dots,2\alpha-2\}$,  the absorption probability has the form
$
\mathbb P_{(u,v)}(T<\infty)=\sum_{k=1}^{2\alpha-1}A_k(u,v)\exp\left(a_ku+b_kv\right),
$
where the $A_k$ are affine functions of the variables $u$ and $v$, see last paragraph of the article.

In Theorem~\ref{thm:laplace2} we state another more general and stronger result which explicitly determines the Laplace transform of the absorption probability in terms of a \nr{conformal gluing}\nb{Gauss hypergeometric} function when 
$$
\alpha\in\mathbb{Z}+\frac{\pi}{\beta}\mathbb{Z}.
$$
In this case, we also find the differential nature of the Laplace transform.
In other words we find sufficient conditions on $\alpha$ for the Laplace transform to be rational, algebraic (\textit{i.e.} satisfying a polynomial equation with coefficients in the
field of rational functions\nb{$\,\;$over $\mathbb R$}), D-finite (\textit{i.e.} satisfying a linear differential equation with coefficients in the field of rational functions\nb{$\,\;$over $\mathbb R$}) or D-algebraic (\textit{i.e.} satisfying a polynomial differential equation with coefficients in the field of rational functions\nb{$\,\;$over $\mathbb R$}).
The differential nature of the Laplace transform reflects in various ways on the absorption probability itself. For example, if it is rational it implies that the absorption probability is a linear combination of exponentials multiplied by polynomials. If it is D-algebraic it will give a recurrence relation for the moments. We refer to the introduction of~\cite{BoMe-El-Fr-Ha-Ra} which explains in more detail the interest of such a classification in this hierarchy of functions:
\begin{equation}
\text{rational}\subset\text{algebraic}\subset\text{D-finite}\subset\text{D-algebraic}.
\label{eq:hierarchy}
\end{equation}
The following table gives sufficient conditions for the Laplace transform to belong to this hierarchy.
\begin{center}
\begin{tabular}{|c|c|c|c|}
\hline
rational & algebraic & D-finite & D-algebraic
\\
\hline
$\alpha\in\mathbb{N}$ & $\pi/\beta\in\mathbb{Q}$ and $\alpha\in\mathbb Z+\frac{\pi}{\beta}\mathbb Z$ & $\alpha\in\mathbb N+\frac{\pi}{\beta}\mathbb Z$ & $\alpha\in\mathbb Z+\frac{\pi}{\beta}\mathbb Z$
\\
\hline
\end{tabular}
\end{center}

\paragraph{Plan and strategy of proof}

Section~\ref{sec:prelim} presents the preliminaries needed to prove our results. For technical reasons, we first transfer the problem initially defined in a wedge into a quadrant thanks to a simple linear transform. The starting point of the proof is a kernel functional equation satisfied by the Laplace transform of the absorption probability as a function of the starting point. This equation is derived from a partial differential equation solved by the probability of absorption. This functional equation leads to a boundary value problem (BVP) already studied in \cite{ErFrHu-20}. In Section~\ref{sec:tutte}, we apply successfully Tutte's invariant method~\cite{tutte_chromatic_1995} to this BVP finding some \textit{decoupling functions}, in a similar way to what was done in the recurrent case for the stationary distribution~\cite{BoMe-El-Fr-Ha-Ra,franceschi_tuttes_2016}. We then compute explicitly the Laplace transform, see Theorems~\ref{thm:laplace} and~\ref{thm:laplace2}. Inverting the bivariate Laplace Transform is no easy task because of a complicated factorization of a two variable polynomial by the kernel. In Section~\ref{sec:compensation}, we then offer a geometrical way to construct the solutions inspired by the compensation approach developed with success in the discrete case for some queueing problems and random walks by Adan, Wessels, and Zijm \cite{adan_wessels_zijm_compensation_93}. 

\paragraph{Related literature and perspectives}

This paper develops an original way of showing these results, which is an alternative, although closely related, to the Dieker and Moriarty \cite{dieker_moriarty_2009} method in the recurrent case. Another approach to show our results might have been to use an equivalence based on time reversal and developed very recently by Harrison \cite{Ha-22} which shows that the hitting time of the vertex is inherently connected to the stationary distribution of a certain dual process, and then apply the results of \cite{dieker_moriarty_2009} to a certain trapezoid described in \cite{Ha-22}.

It is also important to mention the strong links between the results of this article and the Weil chambers and reflection groups. For example, Biane, Bougerol, and O’Connell~\cite{biane_bougerol_oconnell_2005} express the persistence probability, that is the probability that a Brownian motion with drift stays forever in a Weyl chamber, as a sum-of-exponential. We may also mention the article by Defosseux~\cite{defosseux_2016} which expresses similar results for a space-time Brownian motion. 

It is also possible to interpret our problem as the study of the probability of triple collisions for transient competing particle systems with asymmetric collisions. Indeed, a Brownian motion reflected in a quadrant is nothing more than the gap process of such a system made of three particles, and reaching the vertex of the quadrant is equivalent to a triple collision. A very interesting literature is devoted to the study of the absence or presence of such collisions, and as we cannot claim exhaustiveness in these few lines we will limit ourselves to mentioning the articles by Ichiba, Karatzas, and Shkolnikov~\cite{Ich-Kar-Shkol-2013}, Bruggeman and Sarantsev~\cite{bruggeman_sarantsev_2018}\nb{, and Sarantsev~\cite{sarantsev}}. \nb{In stochastic finance, Banner, Fernholz and Karatzas~\cite{bfk} shown strong connections between rank-based models (such as Atlas models) and the reflected Brownian motion.} 

\nb{In queueing theory, the reflected Brownian motion 
can be thought as the (scaled) limit of the queueing length process. Such convergence results are refered to as \textit{heavy traffic limit theorems}, see the founding article by  Harrison~\cite{harrison_diffusion_1978} and the classic book by Whitt~\cite{whitt}. This is still a very active research fields with many applications, see for example~\cite{atakumar,boxma}.
}

To conclude this introduction, we must emphasise that this article is an important step towards a more ambitious outcome. Indeed, we believe that the present results can be extended. More precisely, we believe, as was done in the recurrent case for the stationary distribution in the article of~Bousquet-M{\'e}lou et~al. \cite{BoMe-El-Fr-Ha-Ra}, that it is possible to characterise the algebraic and differential nature of the Laplace transforms of the absorption probability. In a sense, this would 
exhaustively rank the complexity of the absorption probability in the hierarchy~\eqref{eq:hierarchy} according to the value of $\alpha$. Such a result, which would provide sufficient but also necessary conditions, would require difference Galois theory which is beyond the scope of this article.

\section{Preliminaries}
\label{sec:prelim}

\paragraph{From the cone to the quadrant} 
\nb{The results presented in this paper are particularly neat when expressed in terms of the angles that define the process in a cone, whereas the proofs are simpler for a process in the quadrant. This is why} the results stated in the introduction for the standard Brownian motion $\widetilde Z_t$ reflected in a cone $C$ of angle $\beta\in(0,\pi)$, drift angle $\theta$ and reflection angles $\delta$ and $\varepsilon$ will be proved by considering a Brownian motion $Z_t$ reflected in the quadrant $\mathbb{R}_+^2$ with a \nb{positive-definite} covariance matrix, a drift and a reflection matrix noted respectively
$$\Sigma=\left(\begin{array}{cc}
   \sigma_{11}  & \sigma_{12}  \\
   \sigma_{12}  & \sigma_{22}
\end{array}\right),\quad\quad \mu=\left(\begin{array}{c}
     \mu_1 \\
     \mu_2 
\end{array}\right),\quad\quad R=\left(R^1,R^2\right)=\left(\begin{array}{cc}
    1 & -r_2 \\
    -r_1 & 1
\end{array}\right) ,$$
which satisfy the following relations
\begin{equation}
   \cos \beta=\frac{-\sigma_{12}}{\sqrt{\sigma_{11}\sigma_{22}}} ,
   \quad
   \tan \theta =\frac{\mu_2\sqrt{\det(\Sigma)}}{\sigma_{22}\mu_1-\sigma_{12}\mu_2} ,
   \quad
   \tan \delta = \frac{-\sqrt{\det(\Sigma)}}{r_2\sigma_{22}+\sigma_{12}} ,
   \quad
   \tan \varepsilon = \frac{-\sqrt{\det(\Sigma)}}{\sigma_{11}r_1+\sigma_{12}}.
   \label{eq:betathetadeltaepsilon}
\end{equation}
The study of these two processes is equivalent by considering $\phi$ a simple bijective linear transform defined by 
\begin{equation*}
    \phi:=\left(\begin{array}{cc}
    \sqrt{\sigma_{11}} & 0 \\
    0 & \sqrt{\sigma_{22}}
\end{array}\right)\left(\begin{array}{cc}
\sin(\beta) & - \cos(\beta)\\
0 & 1 
\end{array}\right)\in \text{GL}(2,\mathbb R)
\end{equation*}
which maps the cone $C$ onto the first quadrant $\mathbb{R}_+^2=\phi(C)$, and $\widetilde Z_t$ onto $Z_t=\phi(\widetilde{Z}_t)$. We have of course $\phi(0)=0$ and $T=\inf \{ t>0 : \widetilde Z_t =0 \}=\inf \{ t>0 : Z_t =0 \} $.
It is then equivalent to compute the absorption probability for the process $\widetilde Z_t$ starting from $(\widetilde u, \widetilde v)\in C$ and for the process $Z_t$ starting from $(u,v)=\phi(\widetilde u, \widetilde v) \in\mathbb{R}_+^2$. We denote the escape probability
\begin{equation}
f(u,v):= \mathbb{P}_{(u,v)}(T<\infty) 
\quad\text{and}\quad
\widetilde f(\widetilde u,\widetilde v):=f(\phi(\widetilde u, \widetilde v)).
\label{eq:uvtilde}
\end{equation}
This linear transform doesn't affect the form of the absorption probability. More precisely, the absorption probability $f(u,v)$ is a sum-of-exponential, given by~\eqref{eq:soe}, if and only if $\widetilde f(\widetilde u,\widetilde v)$ is a sum-of-exponential, given by
\begin{equation*}
\widetilde f(\widetilde u,\widetilde v)=\sum_{k=1}^{2\alpha-1}c_k \exp\left(a_k\sqrt{\displaystyle\frac{\sigma_{22}}{\det(\Sigma)}}\widetilde u+\frac{1}{\sqrt{\sigma_{22}}}\left[b_k-\frac{a_k\sigma_{12}}{\sqrt{\det(\Sigma)}}\right]\widetilde v\right).
\end{equation*}
\nb{Note that thanks to this linear transformation, our results generalise immediately to all Brownian motions with any covariance matrix (possibly different from the identity) in any convex cone.}


\begin{figure}[t]
\begin{center}
\includegraphics[width=0.7\linewidth]{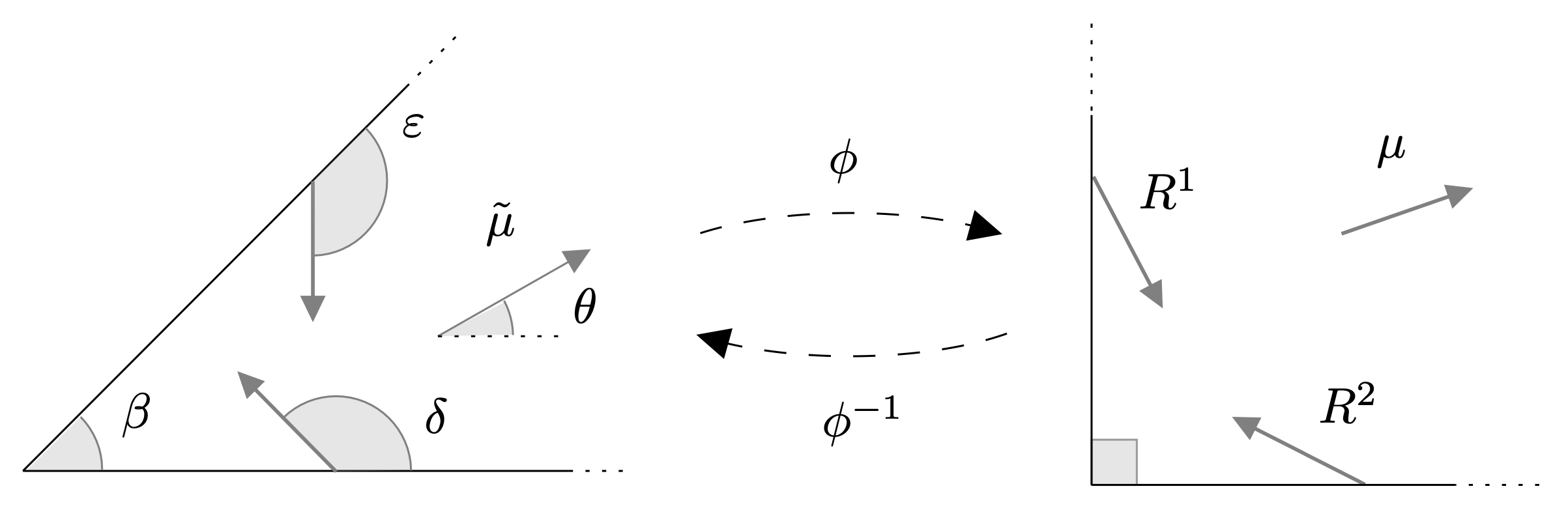}
\caption{Equivalence between the reflected Brownian motion $\widetilde Z_t$ in a $\beta$-wedge with drift $\Tilde{\mu}$ and reflection angles $\delta$ and $\varepsilon$ and the reflected Brownian motion $Z_t$ in the first quadrant with drift $\mu$ and reflection vectors $R^1$ and $R^2$.}
\label{fig:model}
\end{center}
\end{figure}


\paragraph{Partial differential equation} 
The escape probability of the process $Z_t$ starting from $(u,v)$ defined by
$$g(u,v):=1-f(u,v)=\mathbb P_{(u,v)}(T=\infty)$$
satisfies the following partial differential equation, see Proposition 11 in \cite{ErFrHu-20}.
The function $g$ is both bounded and continuous in the quarter plane and on its boundary and continuously differentiable in the quarter plane and on its boundary (except perhaps at the corner), and satisfies the elliptic partial differential equation 
    \begin{equation}\label{eq:PDE}
    \mathcal{G}g:=\left(\frac{1}{2}\nabla \cdot \Sigma\nabla +\mu \cdot\nabla\right)g=0\text{ on }\mathbb R_+^2
    \end{equation}
     with oblique Neumann boundary conditions 
     \begin{equation}\label{eq:neumann}
     \partial_{R^1}g(0,\,\cdot\,):=\left(R^1\cdot \nabla\right) g(0,\,\cdot\,)=0\text{,  }\partial_{R^2}g(\,\cdot\,,0):=\left(R^2\cdot \nabla\right)g(\,\cdot\,,0)=0\text{ on }\mathbb R_+
     \end{equation}
      and the limit conditions 
\begin{equation}\label{eq:limits}
g(0,0)=0,\quad\quad \lim_{\|(u,v)\|\to\infty}g(u,v)=1.
\end{equation}
The absorption probability $f=1-g$ satisfies the same partial differential equation replacing~\eqref{eq:limits} with the appropriate limit conditions. 

\paragraph{Laplace transform and functional equation}
We define the Laplace transform of the escape probability $g(u,v)$ by
$$\varphi(x,y):=\int\!\!\!\!\!\int_{\mathbb R_+^2}\mathbb P_{(u,v)}(T=\infty)e^{-xu-yv}\text{d}u\text{d}v$$
and the Laplace transforms of the escape probabilities $g(0,v)$ and $g(u,0)$ when the process starts from the boundaries by
\begin{equation}
\varphi_1(y):=\int_{\mathbb R_+}\mathbb P_{(0,v)}(T=\infty)e^{-yv}\text{d}v,\quad\quad \varphi_2(x):=\int_{\mathbb R_+}\mathbb P_{(u,0)}(T=\infty)e^{-xu}\text{d}u.
\label{eq:phi1phi2}
\end{equation}
One can easily use integrations by parts to translate the partial differential equation made of the three conditions \eqref{eq:PDE}, \eqref{eq:neumann}, \eqref{eq:limits} into a functional equation for the Laplace transforms.

\begin{proposition}[{Prop. 12 in \cite{ErFrHu-20}}]
  The Laplace transforms  $\varphi$, $\varphi_1$ and $\varphi_2$ satisfy the following kernel functional equation, for $(x,y)\in\mathbb{C}^2$ such that $\Re x >0$ and $\Re y >0$ we have
    \begin{equation}\label{eq:functionaleq}
         K(x,y)\varphi(x,y)=k_1(x,y)\varphi_1(y)+k_2(x,y)\varphi_2(x)
    \end{equation}
    where
    \begin{equation}\label{eq:kernel}
    \left\{\begin{array}{l}
         \displaystyle K(x,y):=\frac{1}{2}(x,y)^\intercal \cdot \Sigma (x,y)^\intercal +\mu \cdot (x,y)^\intercal = \frac{1}{2}(\sigma_{11}x^2+\sigma_{22}y^2+2\sigma_{12}xy)+\mu_1x+\mu_2y , \\[0.2cm]
         \displaystyle k_1(x,y):=\frac{\sigma_{11}}{2}(x+r_1y)+\sigma_{12}y+\mu_1 ,\\[0.2cm]
         \displaystyle k_2(x,y):=\frac{\sigma_{22}}{2}(r_2x+y)+\sigma_{12}x+\mu_2 .
    \end{array}\right.
    \end{equation}
\end{proposition}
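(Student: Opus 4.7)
The plan is to derive the functional equation~\eqref{eq:functionaleq} by performing a Laplace transform of the boundary value problem~\eqref{eq:PDE}--\eqref{eq:limits}. Concretely, I will multiply the identity $\mathcal G g=0$ by the exponential weight $e^{-xu-yv}$, integrate over $\mathbb R_+^2$, and use integration by parts to move every derivative from $g$ onto the exponential. Since $g$ is bounded and continuous on $\overline{\mathbb R_+^2}$, the defining integral of $\varphi(x,y)$ converges absolutely on the domain $\{\Re x>0\}\cap\{\Re y>0\}$, and the boundary contributions at $u=\infty$ or $v=\infty$ vanish for the same reason.

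First I would handle the first order terms $\mu_1\partial_u g$ and $\mu_2\partial_v g$. A single integration by parts in $u$, respectively in $v$, reduces them to $x\varphi(x,y)-\varphi_1(y)$ and $y\varphi(x,y)-\varphi_2(x)$. Then I would tackle the three second order terms $\sigma_{11}\partial_{uu}g$, $\sigma_{22}\partial_{vv}g$, and $2\sigma_{12}\partial_{uv}g$. Two successive integrations by parts produce, besides the expected bulk contributions $x^2\varphi$, $y^2\varphi$ or $xy\varphi$, one-dimensional boundary integrals against $e^{-yv}$ or $e^{-xu}$ of either $g$ or its normal derivative on the corresponding axis. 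The nontrivial ones involve $\partial_u g(0,v)$ and $\partial_v g(u,0)$, which I would eliminate using the oblique Neumann conditions~\eqref{eq:neumann}: these rewrite each normal derivative as $r_1\partial_v g(0,v)$ and $r_2\partial_u g(u,0)$ respectively, and a further integration by parts along the axis — using $g(0,0)=0$ from~\eqref{eq:limits} — converts them into $r_1 y\varphi_1(y)$ and $r_2 x\varphi_2(x)$. The cross derivative $\partial_{uv}g$ is treated analogously and contributes $xy\varphi-x\varphi_2(x)-y\varphi_1(y)$.

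Collecting all contributions by the multiplicative factor of $\varphi$, $\varphi_1(y)$, or $\varphi_2(x)$, the bulk terms recombine into exactly $K(x,y)\varphi(x,y)$, while the axis contributions group into $-k_1(x,y)\varphi_1(y)$ and $-k_2(x,y)\varphi_2(x)$ as defined in~\eqref{eq:kernel}, which yields the claimed equation after rearrangement. The main technical obstacle will be the rigorous justification of the integrations by parts, particularly near the corner where $g$ is only asserted continuous (and not necessarily $C^1$). I expect this to be handled by a standard approximation argument: establish the identity first on $[\epsilon,\infty)^2$, then let $\epsilon\to 0$ using the continuity of $g$, the local boundedness of $\nabla g$ away from the origin, and dominated convergence. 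The various exchanges between the two-dimensional integral and each one-dimensional integration by parts are justified by Fubini, thanks to the absolute integrability secured by $\Re x,\Re y>0$.
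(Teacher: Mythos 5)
Your proposal is correct and takes essentially the same route as the paper, which obtains the functional equation by multiplying the PDE~\eqref{eq:PDE} by $e^{-xu-yv}$, integrating by parts over $\mathbb R_+^2$, and eliminating the normal derivatives on the axes via the oblique Neumann conditions~\eqref{eq:neumann} and $g(0,0)=0$ (the paper states this strategy just before the proposition and defers the details to Proposition~12 of \cite{ErFrHu-20}). Your bookkeeping of the bulk and boundary contributions recombines exactly into $K(x,y)\varphi(x,y)=k_1(x,y)\varphi_1(y)+k_2(x,y)\varphi_2(x)$.
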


\paragraph{Study of the kernel $K$ and uniformization}

To solve functional equation (\ref{eq:functionaleq}), we first need to study $K$, and more precisely its vanishing set
\begin{equation}\label{eq:S}
\mathcal{S}:=\{(x,y)\in \mathbb C^2 : K(x,y)=0\} . 
\end{equation}
\nr{The set $\mathcal{S}$ is an elliptic curve that passes through the origin.} For $x\in \mathbb C$, the equation $K(x,y)=0$ in $y$ is quadratic, and has therefore two solutions $Y^+(x)$ and $Y^-(x)$ in $\mathbb C$:
\begin{equation}
    Y^\pm(x):= \frac{-\sigma_{12}x-\mu_2\pm \sqrt{(\sigma_{12}x+\mu_2)^2-\sigma_{22}(\sigma_{11}x^2+2\mu_1 x)}}{\sigma_{22}}.
    \label{eq:Ypm}
\end{equation}
Likewise, we define $X^+(y)$ and $X^-(y)$ to be the two solutions of the equation $K(x,y)=0$ in the variable $x$.
The curve $\mathcal{S}$ can be thought of as the image of the multivalued function $Y$ (\textit{resp}. $X$) which has two ramification points $x^+$ and $x^-$ (\textit{resp.} $y^+$ and $y^-$) given by
\begin{equation}
x^\pm:=\frac{(\mu_2\sigma_{12}-\mu_1\sigma_{22})\pm \sqrt{(\mu_2\sigma_{12}-\mu_1\sigma_{22})^2+\det(\Sigma)\mu_2^2}}{\det(\Sigma)}.
\label{eq:xpm}
\end{equation}
The branches $X^+$ and $X^-$ are analytic on $\mathbb C\setminus((-\infty,y^-]\cup[y^+,+\infty))$, and $Y^+$ and $Y^-$ are analytic on $\mathbb C\setminus ((-\infty,x^-]\cup[x^+,+\infty))$. 

\nb{\begin{lemma}\label{lemma:irreducible}
The kernel $K$ is irreducible over $\mathbb C[X,Y]$.
\end{lemma}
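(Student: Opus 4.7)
The plan is to use a standard discriminant criterion. Since $K$ has total degree $2$, any nontrivial factorization in $\mathbb{C}[X,Y]$ must express $K$ as a product of two polynomials of degree $1$, so I would view $K$ as a quadratic polynomial in $Y$ with coefficients in $\mathbb{C}[X]$:
$$K(x,y) = \tfrac{\sigma_{22}}{2}\, y^2 + (\sigma_{12}x + \mu_2)\,y + \bigl(\tfrac{\sigma_{11}}{2}\, x^2 + \mu_1 x\bigr).$$
Its leading coefficient $\sigma_{22}/2$ is a nonzero constant (by positive-definiteness of $\Sigma$), so $K$ is primitive in $Y$ over $\mathbb{C}[X]$. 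By Gauss's lemma, it then suffices to show irreducibility in $\mathbb{C}(X)[Y]$, which amounts to showing that the $Y$-discriminant is not a square in $\mathbb{C}(X)$.

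That $Y$-discriminant is precisely the quantity under the square root in \eqref{eq:Ypm}, namely
$$\Delta(x) := (\sigma_{12}x + \mu_2)^2 - \sigma_{22}\bigl(\sigma_{11} x^2 + 2 \mu_1 x\bigr) = -\det(\Sigma)\, x^2 + 2(\sigma_{12}\mu_2 - \sigma_{22}\mu_1)\,x + \mu_2^2.$$
Since $\det(\Sigma) > 0$, $\Delta$ has degree exactly two in $x$, and such a polynomial is a square in $\mathbb{C}(X)$ iff it is a square in $\mathbb{C}[X]$ iff its own discriminant in $x$ vanishes. That discriminant equals, up to the positive factor $4$,
$$(\sigma_{12}\mu_2 - \sigma_{22}\mu_1)^2 + \det(\Sigma)\,\mu_2^2,$$
which is exactly the nonnegative quantity appearing under the square root in the explicit formula \eqref{eq:xpm} for the ramification points $x^\pm$.

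Both summands being nonnegative, their sum vanishes only if $\mu_2 = 0$ (forced by the second term, since $\det\Sigma > 0$) and subsequently $\mu_1 = 0$ (forced by the first once $\mu_2 = 0$). But $\mu = 0$ is excluded by the standing hypothesis that the drift lies in the interior of the cone ($\theta \in (0,\beta)$, hence $\mu_1, \mu_2 > 0$ after the linear change of variables $\phi$). Consequently $\Delta(x)$ is not a perfect square, and $K$ is irreducible in $\mathbb{C}[X,Y]$. The geometric content is the familiar fact that the two branch points $x^\pm$ of the algebraic function $Y(x)$ are distinct whenever the drift is nonzero; the only real bookkeeping step is the invocation of Gauss's lemma to descend from $\mathbb{C}(X)[Y]$ to $\mathbb{C}[X,Y]$, and the computation itself is entirely routine.
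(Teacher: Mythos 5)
Your proof is correct and follows essentially the same route as the paper's first argument: reduce to a putative factorization into two degree-one factors, observe that this forces the quantity under the square root in \eqref{eq:Ypm} to be a perfect square, and rule that out by checking that its discriminant $(\sigma_{12}\mu_2-\sigma_{22}\mu_1)^2+\det(\Sigma)\mu_2^2$ is nonzero (your Gauss's-lemma packaging is just a more formal version of the same reduction). If anything you are slightly more careful than the paper, which attributes the nonvanishing solely to $\det\Sigma>0$, whereas, as you note, one also needs $\mu\neq 0$, guaranteed here by the drift lying in the interior of the cone.
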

\begin{proof}
Suppose $K=AB$ for two non-constant polynomials $A,B\in\mathbb C[X,Y]$. Given the degree of $K$, the polynomials $A$ and $B$ must be of degree $1$:
\begin{equation}
        A(x,y):=a_1x+a_2y+a_3,\quad\quad B(x,y):=b_1x+b_2y+b_3
\end{equation}
where $a_2 b_2=\sigma_{22}/2 \neq 0$.
Solving equation $K(x,y)=A(x,y)B(x,y)=0$ in $y$ yields 
$$\{Y^+(x),Y^-(x)  \}=\{-( a_1x+a_3)/a_2 ,-(b_1x+b_3)/b_2  \}.$$ This means that $Y^+$ and $Y^-$ are affine functions which implies that the polynomial under the square root of Equation~\eqref{eq:Ypm} is the square of an affine function. This is equivalent to say that 
$$(\sigma_{12}x+\mu_2)^2-\sigma_{22}(\sigma_{11}x^2+2\mu_1 x)$$ 
has a double root and then  to the fact that the discriminant 
$$(\mu_2\sigma_{12}-\mu_1\sigma_{22})^2+\det(\Sigma)\mu_2^2$$ 
(already computed in~\eqref{eq:xpm}) is equal to $0$. This is not possible because $\det \Sigma >0$ since $\Sigma$ is a positive-definite covariance matrix. Hence there is no such decomposition of $K$, and $K$ is therefore irreducible.
\\
We can sketch an alternative proof. According to the classification of conics (see~\cite{Catalog}, p.63), the nature of the conic $\{(x,y)\in\mathbb{R}^2 : K(x,y)=0 \}$ only depends on the signs of $\det(\Sigma)$ and the following block matrix determinant
    \begin{equation}
        \Delta:=\left|\begin{array}{c;{2pt/2pt}c}
        \Sigma & \mu \\ \hdashline[2pt/2pt]
        \mu^\intercal & 0 
    \end{array}\right|=-(\mu_2 \sigma_{11}^2 +\mu_1 \sigma_{22}^2-2\mu_1\mu_2\sigma_{12})=-\Big(\mu^\intercal \Sigma^{-1}\mu \Big)\det(\Sigma) .
    \end{equation}
    The matrix $\Sigma$ is positive-definite, and so is its inverse $\Sigma^{-1}$, hence $\det(\Sigma)>0$ and $\Delta<0$, which corresponds to a non-degenerate 
ellipse. If $K=AB$ for two polynomials of degree $1$, it would imply that the ellipse would be equal to the union of two lines and therefore would be degenerate. We deduce again that $K$ is irreducible. 
\\
Note that the discriminant of the first proof is linked to the determinant of the second proof since $(\mu_2\sigma_{12}-\mu_1\sigma_{22})^2+\det(\Sigma)\mu_2^2=-\sigma_{22} \Delta$.
\end{proof}}

It will be handy to work with the following rational uniformization of $\mathcal{S}$, first stated in \cite[Proposition~5]{franceschi_kurkova}, defined by
\begin{equation}
\left(\boldsymbol{\mathrm{x}},\boldsymbol{\mathrm{y}}\right)(s):=\left(\frac{x^++x^-}{2}+\frac{x^+-x^-}{4}\left(s+\frac{1}{s}\right),\frac{y^++y^-}{2}+\frac{y^+-y^-}{4}\left(\frac{s}{e^{i\beta}}+\frac{e^{i\beta}}{s}\right)\right),
\label{eq:unif}
\end{equation}
which is such that
\begin{equation*}
\mathcal{S}=\{(\boldsymbol{\mathrm{x}}(s),\boldsymbol{\mathrm{y}}(s)),\;s\in\mathbb{C}^*\}.
\end{equation*}
In the following, we adopt the notation
\begin{equation}\label{eq:q}
    \boldsymbol{\mathrm{q}}:=e^{2i\beta}.
\end{equation}
The functions $\boldsymbol{\mathrm{x}}$ and $\boldsymbol{\mathrm{y}}$ satisfy the following invariance properties: for all $s\in\mathbb C^*$
\begin{equation}\label{eq:invariance}
\boldsymbol{\mathrm{x}}(s)=\boldsymbol{\mathrm{x}}(s^{-1})\text{ and }\boldsymbol{\mathrm{y}}(s)=\boldsymbol{\mathrm{y}}(\boldsymbol{\mathrm{q}}s^{-1}).
\end{equation}


\begin{lemma} There exists $C_1,C_2\in\mathbb C$ such that for all $s\in\mathbb C^*$ the polynomials defined in~\eqref{eq:kernel} satisfy
    \begin{equation*}
     k_1(\boldsymbol{\mathrm{x}}(s),\boldsymbol{\mathrm{y}}(s))=C_1 \frac{(s-s_0')(s-s_1)}{s},\quad\quad  k_2(\boldsymbol{\mathrm{x}}(s),\boldsymbol{\mathrm{y}}(s))=C_2 \frac{(s-s_0'')(s-s_2)}{s},
\end{equation*}
with
\begin{equation}\label{eq:si}
     s_0':=e^{i(2\beta-\theta)},\quad\quad s_0'':=e^{-i\theta},\quad\quad s_1:=e^{i(\theta+2\varepsilon)}, \quad\quad s_2:=e^{i(\theta-2\delta)}.
\end{equation}
\label{lemma:k(s)}
\end{lemma}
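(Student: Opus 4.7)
My plan is to first obtain the factorised form of $k_i(\boldsymbol{\mathrm{x}}(s),\boldsymbol{\mathrm{y}}(s))$ by a simple degree argument, then to locate one root of each factor as the $s$-value of an obvious point of $\mathcal{S}\cap\{k_i=0\}$, and finally to recover the remaining root via Vieta's formulas. The structural claim is immediate: from~\eqref{eq:unif} both $\boldsymbol{\mathrm{x}}(s)$ and $\boldsymbol{\mathrm{y}}(s)$ are Laurent polynomials supported on $\{s^{-1},1,s\}$, and since $k_1,k_2$ are affine in $(x,y)$ by~\eqref{eq:kernel}, the compositions $k_i(\boldsymbol{\mathrm{x}}(s),\boldsymbol{\mathrm{y}}(s))$ are Laurent polynomials of the same shape $a_{-1}/s+a_0+a_1 s$. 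Multiplying by $s$ yields a polynomial of degree at most $2$ in $s$ which (generically) factorises as $C_i(s-r_i^{(1)})(s-r_i^{(2)})$, giving the stated form.

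For the first root of each factor, I will exhibit an explicit point of $\mathcal{S}\cap\{k_i=0\}$. The point $(-2\mu_1/\sigma_{11},0)$ satisfies both $K=0$ and $k_1=0$ by direct substitution, so its $s$-preimage under the uniformization is a root of $k_1(\boldsymbol{\mathrm{x}}(s),\boldsymbol{\mathrm{y}}(s))$. To identify this preimage as $s_0'$, I first verify using~\eqref{eq:xpm} and the definition of $\theta$ in~\eqref{eq:betathetadeltaepsilon} that $\boldsymbol{\mathrm{x}}(e^{i\theta})=0$ and, symmetrically, $\boldsymbol{\mathrm{y}}(e^{i\theta})=0$; both checks reduce to algebraic identities between the parameters that can be verified directly. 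Hence $(\boldsymbol{\mathrm{x}}(e^{i\theta}),\boldsymbol{\mathrm{y}}(e^{i\theta}))=(0,0)$, and the invariance $\boldsymbol{\mathrm{y}}(s)=\boldsymbol{\mathrm{y}}(\boldsymbol{\mathrm{q}}/s)$ from~\eqref{eq:invariance} then forces the \emph{other} point of $\mathcal{S}\cap\{y=0\}$, namely $(-2\mu_1/\sigma_{11},0)$, to have $s$-preimage $\boldsymbol{\mathrm{q}}/e^{i\theta}=e^{i(2\beta-\theta)}=s_0'$. Symmetrically, $(0,-2\mu_2/\sigma_{22})\in\mathcal{S}\cap\{k_2=0\}$ has $s$-preimage $s_0''=e^{-i\theta}$ via the involution $s\mapsto 1/s$ associated to $\boldsymbol{\mathrm{x}}$.

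For the second roots, I will use Vieta's formulas: the product of the two roots of the quadratic $s\cdot k_i(\boldsymbol{\mathrm{x}}(s),\boldsymbol{\mathrm{y}}(s))$ equals the ratio of its constant term to its leading coefficient, both of which can be read off directly from~\eqref{eq:unif} and~\eqref{eq:kernel}. Dividing by the already-identified root $s_0'$ (resp.\ $s_0''$) gives an explicit value for the other root, and it remains to check that this equals $e^{i(\theta+2\varepsilon)}$ (resp.\ $e^{i(\theta-2\delta)}$). This trigonometric verification is the main technical obstacle: it requires translating between the angular data $(\beta,\theta,\delta,\varepsilon)$ and the covariance/drift/reflection parameters via~\eqref{eq:betathetadeltaepsilon}, together with standard addition and half-angle formulas. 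No new idea is needed beyond careful bookkeeping, but this is the longest step of the argument.
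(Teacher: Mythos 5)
Your proposal is correct and follows essentially the same route as the paper, whose entire proof is the one-line observation that $s\,k_i(\boldsymbol{\mathrm{x}}(s),\boldsymbol{\mathrm{y}}(s))$ is a quadratic whose roots follow from basic trigonometry via~\eqref{eq:betathetadeltaepsilon}. Your version is a more detailed and well-organized implementation of that computation (the identification of $s_0'$ and $s_0''$ through the points $(-2\mu_1/\sigma_{11},0)$ and $(0,-2\mu_2/\sigma_{22})$ and the involutions of~\eqref{eq:invariance} is a clean way to halve the trigonometric work), and the remaining Vieta/trigonometric verification you defer is exactly the step the paper also leaves to the reader.
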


\begin{proof}
For $i\in\{1,2\}$, $k_i(\boldsymbol{\mathrm{x}}(s),\boldsymbol{\mathrm{y}}(s))=0$ is a degree-two polynomial equation whose roots can be computed with some basic trigonometry using~\eqref{eq:betathetadeltaepsilon}.
\end{proof}

\begin{figure}[t]
\begin{center}
\includegraphics[width=\linewidth]{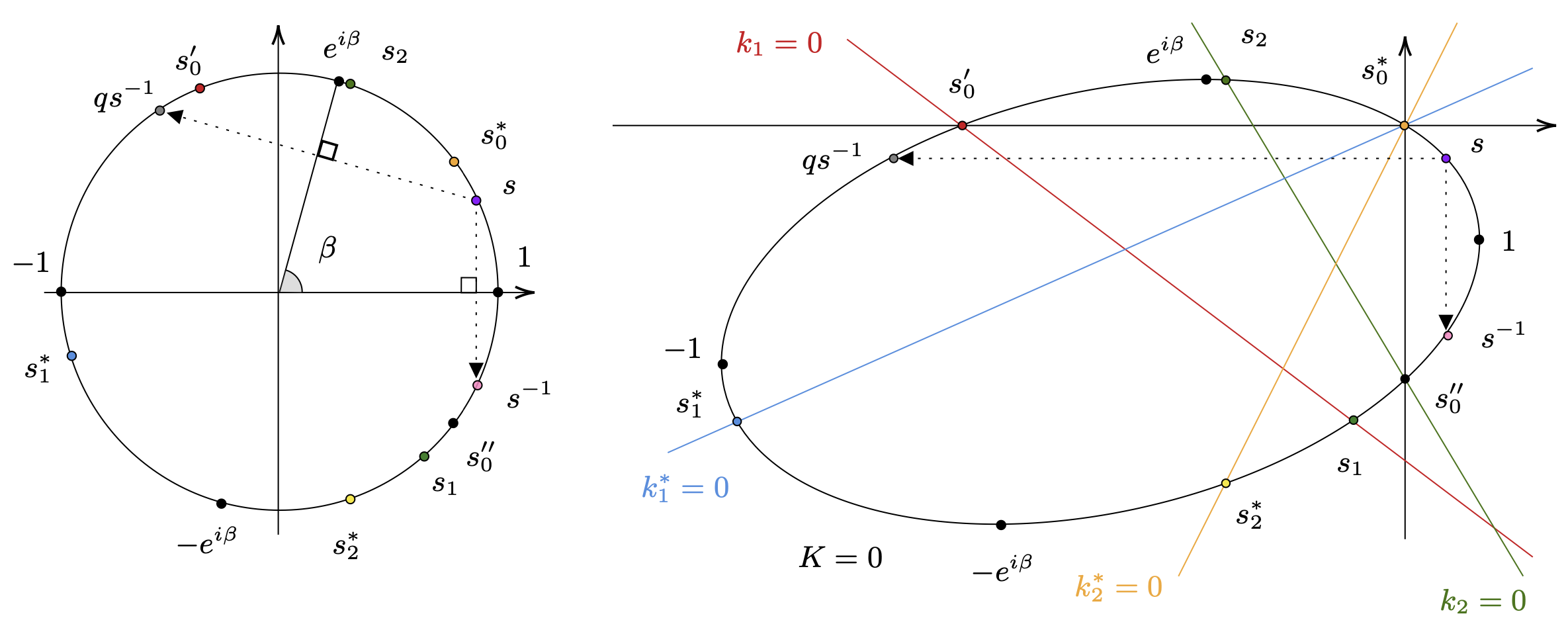}
\end{center}
\caption{On the left, the points introduced in Lemmas~\ref{lemma:k(s)} and~\ref{lemma:k*(s)} on the unit complex circle $\mathbb U$; on the right, their images by $(\boldsymbol{\mathrm{x}},\boldsymbol{\mathrm{y}})$ on the ellipse $\mathcal{S}\cap \mathbb R^2$. For the sake of readability, points on the ellipse are labelled with their preimages. Invariances in~\eqref{eq:invariance} are represented by the dotted arrows.}\label{fig:inv}
\end{figure}

\paragraph{Boundary Value Problem}
We define a hyperbola $\mathcal{H}$ deeply linked to the kernel by
$$
\mathcal{H}:= Y^\pm([x^+,\infty))=\{y\in\mathbb{C}: K(x,y)=0 \text{ and } x\in[x^+,\infty) \}.
$$
Noticing that $\boldsymbol{\mathrm{x}}(\mathbb R_+)=[x^+,+\infty)$ and by the invariance $\boldsymbol{\mathrm{y}}(s)=\boldsymbol{\mathrm{y}}(\boldsymbol{\mathrm{q}}s^{-1})$ we can see that
\begin{equation}\label{eq:Hy}
\mathcal{H}=\boldsymbol{\mathrm{y}}(\mathbb R_+)=\boldsymbol{\mathrm{y}}(\boldsymbol{\mathrm{q}}\mathbb R_+).
\end{equation}
This hyperbola is the boundary of the Boundary Value Problem stated below. We now define $\mathcal{G}_{\mathcal{H}}$ the domain of $\mathbb{C}$ bounded by $\mathcal{H}$ and containing $y^+$, see Figure~\ref{fig:GH}.
By~\eqref{eq:Hy}, remembering  that $\boldsymbol{\mathrm{q}}=e^{2i\beta}$ and $\boldsymbol{\mathrm{y}}^{-1}(y^+)=e^{i\beta}$ we see that
    \begin{equation}\label{eq:GHunif}
    \overline{\mathcal{G}}_\mathcal{H}=\boldsymbol{\mathrm{y}}\Big(\{ae^{ib}, (a,b)\in \mathbb R_+\times [0,2\beta]\}\Big) ,
    \end{equation}
see Figure~\ref{fig:general}. Finally, we compute
\begin{equation}\label{eq:b1}
\boldsymbol{\mathrm{y}}(s_1)=- \frac{2(r_1\mu_1+\mu_2)}{\sigma_{22}+\sigma_{11}r_1^2+2\sigma_{12} r_1}.
\end{equation}

The following proposition is a Carleman Boundary Value Problem which characterizes the Laplace transform $\varphi_1$ and which can be easily obtained from the functional equation~\eqref{eq:functionaleq}, see~\cite{ErFrHu-20}.
\begin{proposition}[Proposition 22 and Lemma~32 in \cite{ErFrHu-20}]The Laplace transform $\varphi_1$ satisfies the boundary value problem:
\begin{enumerate}
\item $\varphi_1$ is meromorphic on the open domain $\mathcal{G}_{\mathcal{H}}$ and continuous on $\overline{\mathcal{G}}_{\mathcal{H}}:=\mathcal{G}_\mathcal{H}\cup\mathcal{H}$;
\item \label{eq:poleBVP} $\varphi_1$ admits one or two poles in $\overline{\mathcal{G}}_{\mathcal{H}}$, $0$ is always a simple pole of $\varphi_1$ and $\boldsymbol{\mathrm{y}}(s_1)$ is a simple pole of $\varphi_1$ if and only if $2\varepsilon+\theta\geqslant 2\pi$;
\item for some positive constant $C$ the asymptotics of $\varphi_1$ when $y\to\infty$ is given by
\begin{equation}\label{eq:asympphi}
\varphi_1 (y)  \sim C y^{-\alpha-1};
\end{equation}
\item $\varphi_1$ satisfies the boundary condition
\begin{equation}\label{eq:BVP1}
    \varphi_1(\overline{y})=G(y)\varphi_1(y),\quad\quad \forall y\in\mathcal{H}
\end{equation}
where
\begin{equation}\label{eq:BVP2}
    \nb{G(y)=\frac{k_1(X^+(y),y)}{k_2(X^+(y),y)} \frac{k_2(X^+(y),\overline{y})}{k_1(X^+(y),\overline{y})}.}
\end{equation}
\end{enumerate}
\label{prop:BVP} 
\end{proposition}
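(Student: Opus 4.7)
The plan is to extract all four items from the single identity obtained by evaluating the functional equation~\eqref{eq:functionaleq} on the kernel curve $\mathcal S$. Since $K(x,y)=0$ there, the equation collapses to
\begin{equation*}
k_1(x,y)\,\varphi_1(y) + k_2(x,y)\,\varphi_2(x) = 0, \qquad (x,y)\in\mathcal S,
\end{equation*}
coupling $\varphi_1$ and $\varphi_2$ along the algebraic curve; this is the backbone of the whole argument. By~\eqref{eq:phi1phi2}, $\varphi_1$ is analytic on $\{\Re y>0\}$, $\varphi_2$ on $\{\Re x>0\}$, and the always-present pole at $0$ comes directly from the limit condition $g(0,v)\to 1$ in~\eqref{eq:limits}: since then $g(0,v)e^{-yv}$ is non-integrable at $y=0$, one gets $y\varphi_1(y)\to 1$ as $y\to 0$, i.e.\ a simple pole at the origin.

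\textbf{Boundary condition and meromorphic extension.} For item 4 I would fix $y\in\mathcal H$ so that $x:=X^+(y)\in[x^+,\infty)$ is real; then $\bar y$ is the other root of $K(x,\cdot)=0$, and applying the $\mathcal S$-identity at $(x,y)$ and $(x,\bar y)$ and eliminating $\varphi_2(x)$ yields precisely~\eqref{eq:BVP1}--\eqref{eq:BVP2}. For item 1, after checking that $X^+$ maps $\mathcal G_\mathcal H$ into the right half-plane (so that $\varphi_2\bigl(X^+(y)\bigr)$ is defined), I would meromorphically extend $\varphi_1$ to $\mathcal G_\mathcal H$ via
\begin{equation*}
\varphi_1(y) := -\frac{k_2(X^+(y),y)}{k_1(X^+(y),y)}\,\varphi_2\bigl(X^+(y)\bigr),
\end{equation*}
which agrees with the original Laplace transform on the overlap with $\{\Re y>0\}$ and is continuous up to $\mathcal H$ by continuity of $X^+$, $k_i$ and $\varphi_2$.

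\textbf{Poles and asymptotics.} Item 2 now reduces to locating the zeros of $k_1(X^+(y),y)$ inside $\mathcal G_\mathcal H$. Read in the uniformization variable, Lemma~\ref{lemma:k(s)} gives the candidates $\{s_0',s_1\}$. The image $\boldsymbol{\mathrm{y}}(s_0')=0$ lies in $\mathcal G_\mathcal H$ and recovers the pole at $0$; for $s_1=e^{i(\theta+2\varepsilon)}$, one uses the sector description~\eqref{eq:GHunif} of $\overline{\mathcal G}_\mathcal H$ together with the invariance $\boldsymbol{\mathrm{y}}(s)=\boldsymbol{\mathrm{y}}(\boldsymbol{\mathrm{q}} s^{-1})$ to test whether $\boldsymbol{\mathrm{y}}(s_1)\in\mathcal G_\mathcal H$, and a short trigonometric case analysis modulo $2\pi$ collapses this to the clean threshold $2\varepsilon+\theta\geqslant 2\pi$. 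Simplicity of both poles comes from the simple vanishing of $k_1$ at $s_0'$ and $s_1$ on $\mathcal S$ visible in Lemma~\ref{lemma:k(s)}. For item 3, a Watson-type Tauberian argument applied to $\varphi_1(y)=\int_0^\infty g(0,v)e^{-yv}\,\mathrm dv$ reduces the claim to the local behavior $g(0,v)\sim c\,v^\alpha$ as $v\to 0^+$, which is the classical Varadhan--Williams/Taylor--Williams scaling at the vertex of a wedge with parameter $\alpha$.

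\textbf{Main obstacle.} The delicate step is the geometric bookkeeping in item 2: the invariance $\boldsymbol{\mathrm{y}}(s)=\boldsymbol{\mathrm{y}}(\boldsymbol{\mathrm{q}} s^{-1})$ creates a two-fold ambiguity for the preimage of $\boldsymbol{\mathrm{y}}(s_1)$, so one has to identify precisely which representative lies in the sector of~\eqref{eq:GHunif} to obtain the clean (and at first sight surprising) threshold $2\varepsilon+\theta\geqslant 2\pi$. A secondary subtlety is the verification that $X^+$ sends $\mathcal G_\mathcal H$ into the right half-plane so that $\varphi_2\bigl(X^+(y)\bigr)$ is defined throughout the extension step; this is where the choice of the branch $X^+$ (rather than $X^-$) is essential.
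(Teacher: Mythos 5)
This proposition is imported verbatim from \cite{ErFrHu-20} (Proposition~22 and Lemma~32 there); the present paper gives no proof of its own, so there is no internal argument to compare against. Your outline follows the same standard kernel-method route as that reference: restrict \eqref{eq:functionaleq} to $\mathcal S$, eliminate $\varphi_2$ to obtain \eqref{eq:BVP1}--\eqref{eq:BVP2}, continue $\varphi_1$ through the kernel curve, and read off the poles from the uniformized zeros of $k_1$ (your identification of the threshold $2\varepsilon+\theta\geqslant 2\pi$ via the two preimages of $\boldsymbol{\mathrm{y}}(s_1)$ in the sector \eqref{eq:GHunif} is correct).

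Two steps, however, do not hold as written. First, the ``check'' that $X^+$ maps $\mathcal G_{\mathcal H}$ into $\{\Re x>0\}$ fails: the point $0=\boldsymbol{\mathrm{y}}(s_0')$, with $\arg s_0'=2\beta-\theta\in(0,2\beta)$, lies in the open domain $\mathcal G_{\mathcal H}$, and the two roots of $K(\,\cdot\,,0)=0$ are $0$ and $-2\mu_1/\sigma_{11}<0$, so neither branch value lies in the open right half-plane where $\varphi_2$ is a priori defined. Your continuation formula is therefore not directly meaningful on all of $\mathcal G_{\mathcal H}$; one must first continue $\varphi_2$ itself (or bootstrap the two boundary transforms alternately along $\mathcal S$), and this continuation is precisely the nontrivial content of the cited Proposition~22. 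A similar caveat applies to item~4: for obtuse $\beta$ part of $\mathcal H$ has $\Re y<0$, so the identity $k_1\varphi_1+k_2\varphi_2=0$ must already be the continued one before the elimination is performed. Second, your derivation of \eqref{eq:asympphi} rests on the exact corner behaviour $g(0,v)\sim c\,v^{\alpha}$ with $c\neq 0$ as $v\to 0^+$; this Varadhan--Williams-type expansion (with a nonvanishing leading constant) is a substantive input, not a formality, and is where the cited Lemma~32 does its work. With these two points supplied from \cite{ErFrHu-20}, your architecture is the right one.
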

In the next section, our strategy will be to find cases where the function $G$ simplifies in order to find rational and D-algebraic solutions to this Boundary Value Problem.


\section{Tutte's invariants and Laplace transform}
\label{sec:tutte}

\subsection{Decoupling and Tutte's invariant}

\paragraph{Heuristic of the method}

The method involves finding all cases where there exists decoupling in the following sense. Recall that $\mathcal{S}=\{(x,y)\in\mathbb C^2 : K(x,y)=0\}$.
\begin{definition}[Decoupling]
A pair of rational functions $(P,Q)$ satisfying
\begin{equation}\label{eq:decouple}
    \frac{k_2(x,y)}{k_1(x,y)}=\lambda\frac{P(x)}{Q(y)}\text{ for all }(x,y)\in\mathcal{S}
\end{equation}
for some constant $\lambda$ is called a \nb{\textit{decoupling pair}}. 
\label{def:decoupling}
\end{definition}
We shall see that the existence of a decoupling pair leads to the study of what is called an invariant, which glues together the upper and the lower branches of the hyperbola $\mathcal{H}$ in the following sense.
\begin{definition}[Invariant]
A function $I$ which is meromorphic in $\mathcal{G}_{\mathcal{H}}$, continuous on its boundary $\mathcal{H}$ and satisfying $I(\overline{y})=I(y)$ for all $y\in \mathcal{H}$ is called an \nb{\textit{invariant}}. 
\label{def:inv}
\end{definition}

Under the existence of a decoupling pair $(P,Q)$, the boundary condition (\ref{eq:BVP1}) can be rewritten as
\begin{equation}\label{eq:Qphi1}
Q\varphi_1(\overline{y})=Q\varphi_1(y),\quad\quad \forall y\in \mathcal{H}.
\end{equation} 
If $(P,Q)$ is a decoupling pair,
the function $Q\varphi_1$ is then called the unknown invariant (since we are looking for $\varphi_1$). 
We now introduce a conformal \textit{gluing} function $w$, which we call the canonical invariant, in terms of a classical Gauss hypergeometric function which is often called \textit{generalized Chebyshev polynomial},
\begin{equation*}
w(y):=\!\!\!\phantom{x}_2F_1\left(-\frac{\pi}{\beta},\frac{\pi}{\beta};\frac{1}{2};\frac{1}{2}\left(1-\frac{2y-(y^++y^-)}{y^+-y^-}\right)\right)
=\cos\left(\frac{\pi}{\beta}\arccos\left(\frac{2y-(y^++y^-)}{y^+-y^-}\right)\right) .
\end{equation*}
The fact that $w$ is a conformal invariant (in the sense of Definition \ref{def:inv}) is proven in Lemma~5.3 of~\cite{BoMe-El-Fr-Ha-Ra}. In particular, $w$ is analytic and bijective from $\mathcal{G}_{\mathcal{H}}$ to $\mathbb C\setminus (-\infty,-1]$ and
\begin{equation}
w(\overline{y})=w(y),\quad\quad \forall y \in\mathcal{H}.
\label{eq:winv}
\end{equation}
\nb{Equation~\eqref{eq:winv} justifies the terminology conformal \textit{gluing} function for $w$, often encountered in the literature on Tutte's invariant method.}

Lemma 5.3 of~\cite{BoMe-El-Fr-Ha-Ra} also provides, for some constant $\hat{C}$, the asymptotics
\begin{equation}\label{eq:asympw}
w(y)\sim  \hat{C}y^{\pi/\beta}.
\end{equation}
It is also well known that $w$ is a polynomial when $\pi/\beta\in\mathbb{Z}$, is algebraic when $\pi/\beta\in\mathbb{Q}$ and is always D-finite. Remark that the set of D-finite functions is stable by multiplication but not by division and $1/w$ is D-algebraic but not necessarily D-finite. See Proposition 5.2 of~\cite{BoMe-El-Fr-Ha-Ra}.

The key point of Tutte's invariant method is to express the unknown invariant in terms of the canonical invariant. The following crucial lemma shows that there are few invariants.
\begin{lemma}[Invariant lemma]\label{lemma:Tutte}
If $I$ is an invariant in the sense of Definition~\ref{def:inv} which doesn't have any pole on $\overline{\mathcal{G}}_\mathcal{H}$ and has a finite limit at $+\infty$, then $I$ is constant. 
\end{lemma}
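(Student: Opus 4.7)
The plan is to transport the boundary-value setting on $\overline{\mathcal{G}}_\mathcal{H}$ to the cut plane $\mathbb C\setminus(-\infty,-1]$ via the conformal gluing function $w$, and then reduce the conclusion to Liouville's theorem. First, I would define $J:=I\circ w^{-1}$ on $\mathbb C\setminus(-\infty,-1]$. Since $w$ is an analytic bijection from $\mathcal{G}_\mathcal{H}$ onto $\mathbb C\setminus(-\infty,-1]$, its inverse $w^{-1}$ is analytic there; combined with the hypothesis that $I$ has no poles in $\overline{\mathcal{G}}_\mathcal{H}$, this makes $J$ analytic on $\mathbb C\setminus(-\infty,-1]$ and continuous up to the cut from either side (using the continuity of $I$ on $\overline{\mathcal{G}}_\mathcal{H}$).

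The core step is to show that $J$ extends continuously across the cut. For $t\in(-\infty,-1]$, the limits of $w^{-1}(z)$ as $z\to t$ from above and from below form a pair of complex conjugate points $y,\overline{y}\in\mathcal{H}$, this being precisely the content of the gluing identity $w(\overline{y})=w(y)$ from~\eqref{eq:winv}. The invariance hypothesis $I(\overline{y})=I(y)$ on $\mathcal{H}$ then forces the two boundary values of $J$ at $t$ to agree, so $J$ is continuous on all of $\mathbb C$. Morera's theorem, applied to small rectangles straddling the cut, then upgrades $J$ to an entire function. I expect the main technical care to lie here: one must verify, using the uniformization~\eqref{eq:unif}, that the two sides of the cut are indeed identified with the upper and lower branches of $\mathcal{H}$ in a conjugate fashion, so that the invariance of $I$ really does kill the jump of $J$ across $(-\infty,-1]$.

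To conclude, I would use the asymptotics~\eqref{eq:asympw}, namely $w(y)\sim \hat C\,y^{\pi/\beta}$, which imply that $w^{-1}$ sends neighborhoods of $\infty$ in $\mathbb C$ to neighborhoods of $\infty$ in $\overline{\mathcal{G}}_\mathcal{H}$. The hypothesis that $I$ has a finite limit at $+\infty$ in $\overline{\mathcal{G}}_\mathcal{H}$ therefore translates into $J$ having a finite limit as $|z|\to\infty$. In particular $J$ is a bounded entire function, and Liouville's theorem forces $J$ to be constant. Hence $I=J\circ w$ is constant as well.
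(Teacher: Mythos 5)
Your proposal is correct and follows essentially the same route as the paper's proof: transport $I$ to the cut plane via $w^{-1}$, use the invariance $I(\overline{y})=I(y)$ together with $w(\overline{y})=w(y)$ to remove the jump across $(-\infty,-1]$, apply Morera's theorem, and conclude with Liouville using the boundedness supplied by the finite limit at infinity. The extra care you take in identifying the two sides of the cut with the conjugate branches of $\mathcal{H}$ is exactly the detail the paper leaves implicit.
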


\begin{proof}
Since $w$ is conformal, and maps $\mathcal{G}_{\mathcal{H}}$ to the cut plane $\mathbb C\setminus (-\infty,-1]$, $I \circ w^{-1}$ is analytic on this cut plane, and continuous on the cut thanks to (\ref{eq:winv}). By Morera's theorem, $I \circ w^{-1}$ (or better yet, its continuous extension) is meromorphic on $\mathbb C$. Since $I$ is bounded, Liouville's theorem implies that $I \circ w^{-1}$ (and then $I$) is constant.
\end{proof}

\paragraph{Decoupling}

\begin{figure}[t]
\centering
\includegraphics[width=0.7\linewidth]{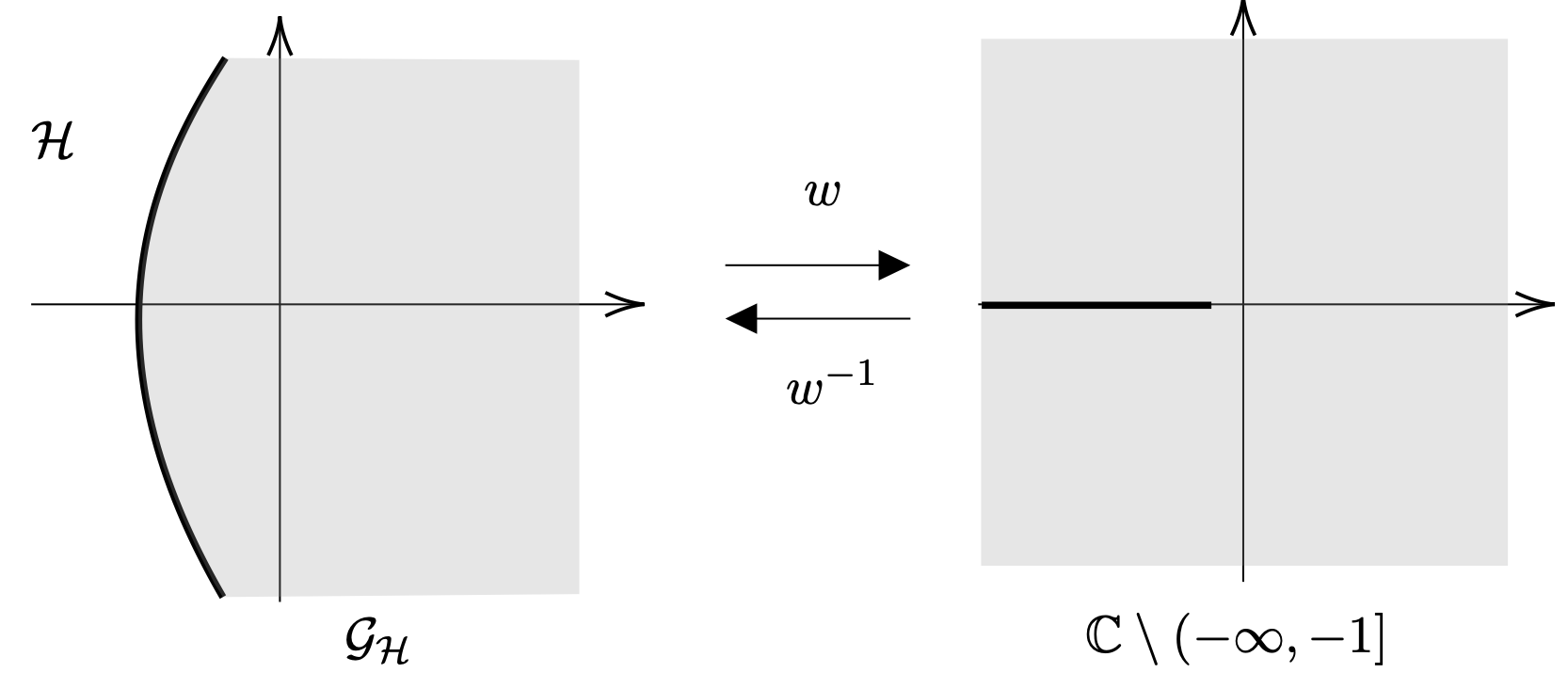}
\caption{Domains and codomains for $w$ and its inverse.}
\label{fig:GH}
\end{figure}

In the following proposition, we obtain a necessary and sufficient condition for the existence of a decoupling condition \eqref{eq:decouple} and we give explicit decoupling pairs.
\begin{proposition}[Decoupling condition]
There exists a rational decoupling pair $(P,Q)$ if and only if 
$$\alpha\in\mathbb Z+\frac{\pi}{\beta}\mathbb Z.$$ In this case let $d,r\in\mathbb{Z}$ such that $\alpha=d-1+r\pi/\beta$, \textit{i.e.} $\delta+\varepsilon=(d-1)\beta + (r+1)\pi$. Then $d$ cannot be equal to $1$ and we distinguish two cases:
    \begin{itemize}
    \item If $d\geqslant 2$, then one can choose the following polynomial decoupling pair,
    \begin{equation}\label{eq:PQ}
        P(x):=x \prod_{k=0}^{d-2} \frac{x-\boldsymbol{\mathrm{x}}(s_2\boldsymbol{\mathrm{q}}^k)}{-\boldsymbol{\mathrm{x}}(s_2\boldsymbol{\mathrm{q}}^k)},
        \quad\quad 
        Q(y):=y\prod_{k=0}^{d-2}\frac{y-\boldsymbol{\mathrm{y}}\left(s_1/\boldsymbol{\mathrm{q}}^k\right)}{-\boldsymbol{\mathrm{y}}\left(s_1/\boldsymbol{\mathrm{q}}^k\right)}.
    \end{equation}
    \item If $d\leqslant 0$, then one can choose the following rational decoupling pair,
    \begin{equation}\label{eq:PQ2}
        P(x):=x \prod_{k=1}^{1-d} \frac{-\boldsymbol{\mathrm{x}}(s_2\boldsymbol{\mathrm{q}}^k)^{-1}}{x-\boldsymbol{\mathrm{x}}(s_2/\boldsymbol{\mathrm{q}}^{k})},
        \quad\quad 
        Q(y):=y\prod_{k=1}^{1-d}\frac{-\boldsymbol{\mathrm{y}}\left(s_1/\boldsymbol{\mathrm{q}}^k\right)^{-1}}{y-\boldsymbol{\mathrm{y}}\left(s_1\boldsymbol{\mathrm{q}}^k\right)}.
    \end{equation}
    \end{itemize}
We have $d=\mathrm{deg}(P)=\mathrm{deg}(Q)$.
\label{prop:decoupl}
\end{proposition}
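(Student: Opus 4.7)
The plan is to lift the decoupling equation to the rational uniformization $(\boldsymbol{\mathrm{x}},\boldsymbol{\mathrm{y}})$ of $\mathcal{S}$. By Lemma~\ref{lemma:k(s)}, the pull-back of $k_2/k_1$ along $(\boldsymbol{\mathrm{x}},\boldsymbol{\mathrm{y}})$ is $(C_2/C_1)(s-s_0'')(s-s_2)/((s-s_0')(s-s_1))$, so the decoupling condition~\eqref{eq:decouple} becomes
\begin{equation*}
\frac{P(\boldsymbol{\mathrm{x}}(s))}{Q(\boldsymbol{\mathrm{y}}(s))}\;=\;\frac{1}{\lambda}\cdot\frac{C_2}{C_1}\cdot\frac{(s-s_0'')(s-s_2)}{(s-s_0')(s-s_1)},\qquad s\in\mathbb{C}^*.
\end{equation*}
The left-hand side is then analysed using the invariances~\eqref{eq:invariance}: the $s$-zeros of $P(\boldsymbol{\mathrm{x}}(s))$ come in $\iota_x$-pairs $\{t,1/t\}$ and those of $Q(\boldsymbol{\mathrm{y}}(s))$ in $\iota_y$-pairs $\{u,\boldsymbol{\mathrm{q}}/u\}$, while both functions have poles of order $\mathrm{deg}(P)=\mathrm{deg}(Q)=:d$ at $0$ and $\infty$. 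A short computation from~\eqref{eq:betathetadeltaepsilon}--\eqref{eq:xpm} yields $\boldsymbol{\mathrm{x}}(s_0'')=\boldsymbol{\mathrm{y}}(s_0')=0$ and the crucial identity $\boldsymbol{\mathrm{q}}/s_0'=1/s_0''$, so that the prefactors $x$ in $P$ and $y$ in $Q$ automatically produce the pair $(s_0',s_0'')$ present in the target, leaving only the $(s_1,s_2)$ part to be accounted for.

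For the necessity direction I will trace the ``$s_2$-chain''. The non-cancelled $P$-zero $s_2$ forces, via $\iota_x$, $1/s_2$ to be a $P$-zero; since it cannot remain unpaired, it must also be a $Q$-zero, which by $\iota_y$ forces $\boldsymbol{\mathrm{q}}s_2$ to be a $Q$-zero and hence a $P$-zero, and so on. The resulting alternating orbit $\{\boldsymbol{\mathrm{q}}^ks_2,1/(\boldsymbol{\mathrm{q}}^ks_2):k\geqslant 0\}$ must reach the target $\{s_1,\boldsymbol{\mathrm{q}}/s_1\}$ in finitely many steps, for otherwise $P$ would have infinitely many zeros. The termination condition is $s_1/s_2\in\boldsymbol{\mathrm{q}}^{\mathbb{Z}}$, i.e.\ $e^{2i(\varepsilon+\delta)}=e^{2i(d-1)\beta}$ for some $d\in\mathbb{Z}$, which rewrites as $\delta+\varepsilon=(d-1)\beta+(r+1)\pi$ with $r\in\mathbb{Z}$, that is, $\alpha=d-1+r\pi/\beta$. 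The impossibility $d=1$ then follows at once: $d=1$ would force $\delta+\varepsilon=(r+1)\pi$, and the constraint $\delta,\varepsilon\in(0,\pi)$ imposes $r=0$, giving $\alpha=0$, which contradicts~\eqref{eq:alphageq1}.

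For the sufficiency direction I will substitute the explicit pair~\eqref{eq:PQ} (respectively~\eqref{eq:PQ2}) and check the identity by tracking zeros and poles. In the case $d\geqslant 2$, the $s$-zeros of $P(\boldsymbol{\mathrm{x}}(s))$ are $\{s_0'',1/s_0''\}\cup\{\boldsymbol{\mathrm{q}}^ks_2,\boldsymbol{\mathrm{q}}^{-k}/s_2:0\leqslant k\leqslant d-2\}$ and those of $Q(\boldsymbol{\mathrm{y}}(s))$ are $\{s_0',\boldsymbol{\mathrm{q}}/s_0'\}\cup\{s_1/\boldsymbol{\mathrm{q}}^k,\boldsymbol{\mathrm{q}}^{k+1}/s_1:0\leqslant k\leqslant d-2\}$; using the relation $s_1=\boldsymbol{\mathrm{q}}^{d-1}s_2$ one matches these two families element by element, leaving exactly $\{s_0'',s_2\}$ on the $P$-side and $\{s_0',s_1\}$ on the $Q$-side, while the order-$d$ poles at $0$ and $\infty$ cancel in the ratio. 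The case $d\leqslant 0$ is the mirror image obtained by running the chain from $s_1$ back to $s_2$ (with $s_2=\boldsymbol{\mathrm{q}}^{1-d}s_1$); here $P$ and $Q$ pick up poles along the orbit rather than zeros, which is why~\eqref{eq:PQ2} is rational rather than polynomial. The main obstacle will be the combinatorial bookkeeping of these $2(d-1)$ cancellations: one must check that the specific powers of $\boldsymbol{\mathrm{q}}$ chosen in~\eqref{eq:PQ} and~\eqref{eq:PQ2} align every intermediate point as simultaneously an $\iota_x$-image of a $P$-zero (or pole) and an $\iota_y$-image of a $Q$-zero (or pole), and that the normalising constants $-\boldsymbol{\mathrm{x}}(s_2\boldsymbol{\mathrm{q}}^k)^{-1}$ and $-\boldsymbol{\mathrm{y}}(s_1/\boldsymbol{\mathrm{q}}^k)^{-1}$ combine into a single scalar $\lambda$ independent of $s$.
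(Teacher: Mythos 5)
Your sufficiency argument is essentially the paper's: substitute the explicit pair, pull back through the uniformization using $s_0'=\boldsymbol{\mathrm{q}}s_0''$ and $s_1=\boldsymbol{\mathrm{q}}^{d-1}s_2$, and let the product telescope down to $(s-s_0'')(s-s_2)/\bigl((s-s_0')(s-s_1)\bigr)$ times a scalar. The necessity direction is where you genuinely diverge. The paper forms the quotient
\begin{equation*}
\frac{k_2(\boldsymbol{\mathrm{x}}(s),\boldsymbol{\mathrm{y}}(s))\,k_1(\boldsymbol{\mathrm{x}}(s^{-1}),\boldsymbol{\mathrm{y}}(s^{-1}))}{k_1(\boldsymbol{\mathrm{x}}(s),\boldsymbol{\mathrm{y}}(s))\,k_2(\boldsymbol{\mathrm{x}}(s^{-1}),\boldsymbol{\mathrm{y}}(s^{-1}))},
\end{equation*}
which the invariances collapse to $Q(\boldsymbol{\mathrm{y}}(\boldsymbol{\mathrm{q}}s))/Q(\boldsymbol{\mathrm{y}}(s))$, and then simply compares limits as $s\to+\infty$ to get $s_0's_1/(s_0''s_2)=\boldsymbol{\mathrm{q}}^{d}$ in one line. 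Your orbit-chasing on the divisor of $P(\boldsymbol{\mathrm{x}}(s))/Q(\boldsymbol{\mathrm{y}}(s))$ is the standard geometric picture behind such decouplings and gives more insight into \emph{why} the explicit pairs look as they do, but as written it leaves several terminations of the chain unexamined: (a) the chain issued from $s_2$ could a priori be absorbed by the pole at $s_0'$ rather than at $s_1$ (this happens for special values of $\theta$; one then has to run a second chain from $s_0''$ to $s_1$ and add the two resulting congruences to recover $\alpha\in\mathbb Z+\frac{\pi}{\beta}\mathbb Z$ — it works, but it is not automatic); (b) the chain could pass through a fixed point of one of the involutions ($s=\pm1$ for $\iota_x$, $s=\pm e^{i\beta}$ for $\iota_y$), where a simple zero of $P$ or $Q$ pulls back to a double zero and the pairing bookkeeping changes; (c) you assume $x\mid P$ and $y\mid Q$ (to dispose of the $(s_0',s_0'')$ pair), which is part of the specific construction, not of an arbitrary rational decoupling pair. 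None of these is fatal, but each needs an explicit word; the paper's limit argument avoids all of them at once, which is the main thing its approach buys. Your derivation of $d\neq1$ coincides with the paper's.
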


\begin{proof} If there exists a rational decoupling $(P,Q)$, see \eqref{eq:decouple}, then by the invariances of $\boldsymbol{\mathrm{x}}$ and $\boldsymbol{\mathrm{y}}$ established in \eqref{eq:invariance} we have
\begin{equation}
    \frac{k_2(\boldsymbol{\mathrm{x}}(s),\boldsymbol{\mathrm{y}}(s))k_1(\boldsymbol{\mathrm{x}}(s^{-1}),\boldsymbol{\mathrm{y}}(s^{-1}))}{k_1(\boldsymbol{\mathrm{x}}(s),\boldsymbol{\mathrm{y}}(s))k_2(\boldsymbol{\mathrm{x}}(s^{-1}),\boldsymbol{\mathrm{y}}(s^{-1}))}
    =\frac{P(\boldsymbol{\mathrm{x}}(s))Q(\boldsymbol{\mathrm{y}}(s^{-1}))}{Q(\boldsymbol{\mathrm{y}}(s))P(\boldsymbol{\mathrm{x}}(s^{-1}))}
    =\frac{Q(\boldsymbol{\mathrm{y}}(\boldsymbol{\mathrm{q}}s))}{Q(\boldsymbol{\mathrm{y}}(s))} .
    \label{eq:lim1}
\end{equation}
According to Lemma~\ref{lemma:k(s)} we also have
\begin{equation}
    \frac{k_2(\boldsymbol{\mathrm{x}}(s),\boldsymbol{\mathrm{y}}(s))k_1(\boldsymbol{\mathrm{x}}(s^{-1}),\boldsymbol{\mathrm{y}}(s^{-1}))}{k_1(\boldsymbol{\mathrm{x}}(s),\boldsymbol{\mathrm{y}}(s))k_2(\boldsymbol{\mathrm{x}}(s^{-1}),\boldsymbol{\mathrm{y}}(s^{-1}))}=\frac{(s-s_0'')(s-s_2)}{(s-s_0')(s-s_1)}\cdot\frac{(s^{-1}-s_0')(s^{-1}-s_1)}{(s^{-1}-s_0'')(s^{-1}-s_2)} .
     \label{eq:lim2}
\end{equation}
Taking the limit as $s$ goes to $+\infty$ of~\eqref{eq:lim1} and~\eqref{eq:lim2}, we get 
\begin{equation}\label{eq:lim}
    \frac{s_0's_1}{s_0''s_2}=\boldsymbol{\mathrm{q}}^{d},\quad\text{where }d:=\text{deg}(Q)\in\mathbb{Z}.
\end{equation}
Plugging in the values of $s_0'$, $s_0''$, $s_1$, $s_2$ obtained in Lemma~\ref{lemma:k(s)} and remembering that $\boldsymbol{\mathrm{q}}=e^{2i\beta}$ in~\eqref{eq:lim} we obtain that $e^{2i(\beta+\delta+\varepsilon)}=e^{2id\beta}$ and then there exists $r\in\mathbb Z$ such that
\begin{equation}\label{eq:decouplecondition}
    \alpha=\frac{\delta +\varepsilon -\pi}{\beta}=d-1 + r\frac{\pi}{\beta}.
\end{equation}
Conversely, we assume that~\eqref{eq:decouplecondition} holds. First, let us treat the case $d\geqslant 2$ and assume that $P$ and $Q$ are given by~\eqref{eq:PQ}. Through the uniformization~\eqref{eq:unif}, one gets
    \begin{equation*}
    P(\boldsymbol{\mathrm{x}}(s))=\frac{1}{s^{d}}(s-s_0'')\left(s-\frac{1}{s_0''}\right)\prod_{k=0}^{d-2}\frac{s-s_2\boldsymbol{\mathrm{q}}^k}{-\boldsymbol{\mathrm{x}}(s_2\boldsymbol{\mathrm{q}}^k)}\left(s-\frac{1}{s_2\boldsymbol{\mathrm{q}}^k}\right),
    \end{equation*}
    \begin{equation*}
    Q(\boldsymbol{\mathrm{y}}(s))=\frac{1}{s^{d}}(s-s_0')\left(s-\frac{\boldsymbol{\mathrm{q}}}{s_0'}\right)\prod_{k=0}^{d-2}\frac{s-s_1/\boldsymbol{\mathrm{q}}^k}{-\boldsymbol{\mathrm{y}}\left(s_1/\boldsymbol{\mathrm{q}}^k\right)}\left(s-\frac{\boldsymbol{\mathrm{q}}^{k+1}}{s_1}\right).
    \end{equation*}
   We know that $s_0'=\boldsymbol{\mathrm{q}}s_0''$. Given that $\alpha=d-1+r\pi/\beta$ with $d$ and $r$ in $\mathbb Z$, we can also use the fact that $s_1=\boldsymbol{\mathrm{q}}^{d-1}s_2$. When taking the ratio of $P(\boldsymbol{\mathrm{x}}(s))$ and $Q(\boldsymbol{\mathrm{y}}(s))$, these identities produce a telescoping which gives
    \begin{equation*}
    \frac{P(\boldsymbol{\mathrm{x}}(s))}{Q(\boldsymbol{\mathrm{y}}(s))}
    = \prod_{k=0}^{d-2}\frac{\boldsymbol{\mathrm{y}}(s_1/\boldsymbol{\mathrm{q}}^k)}{\boldsymbol{\mathrm{x}}(s_2\boldsymbol{\mathrm{q}}^k)}
    \frac{(s-s_0'')(s-s_2)}{(s-s_0')(s-s_1)}
    = \frac{1}{\lambda}
  \frac{  k_2(\boldsymbol{\mathrm{x}}(s),\boldsymbol{\mathrm{y}}(s))}{k_1(\boldsymbol{\mathrm{x}}(s),\boldsymbol{\mathrm{y}}(s))},
    \end{equation*}
where the last equality comes from Lemma~\ref{lemma:k(s)} and taking
\begin{equation}\label{eq:lambda}
\lambda:=\frac{C_1}{C_2}\prod_{k=0}^{d-2}\frac{\boldsymbol{\mathrm{x}}(s_2\boldsymbol{\mathrm{q}}^k)}{\boldsymbol{\mathrm{y}}(s_1/\boldsymbol{\mathrm{q}}^k)} .
\end{equation}
We deduce that $(P,Q)$ is a decoupling pair.
The proof is similar for the case $d\leqslant 0$. The fact that $d$ cannot be equal to $1$ directly derives from the fact that $\varepsilon,\delta\in (0,\pi)$, $\beta\in(0,\pi)$ and $\alpha\geqslant 1$.
\end{proof}

\begin{lemma}[Simple root condition]\label{lemma:double}
Let $\alpha\in \mathbb Z+\frac{\pi}{\beta}\mathbb Z$ and $(d,r)\in\mathbb{Z}^2$ such that $\alpha=d-1+r\pi/\beta$. If $\beta/\pi\in\mathbb{Q}$ then $(d,r)$ is not unique, in this case for $\beta/\pi=p/q$ for $p$ and $q$ relatively prime and $p<q$, we (can) choose $d$ such that $|d|<q$. If $d\geqslant 2$ (resp. $d\leqslant 0$) then $P$ and $Q$ have no multiple roots (resp. pole) if and only if for all $k\in\{1,\dots,2d-4\}$ (resp. $k\in\{2d-1,\dots,-2\}$) we have
\begin{equation}
\theta-2\delta+k\beta\not\equiv 0\;\mathrm{mod}(\pi).
\label{eq:doubleroot} 
\end{equation}
\end{lemma}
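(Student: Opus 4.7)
The plan has two stages, matching the two parts of the statement.

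\textbf{Normalisation of $(d,r)$.} I would start from the observation that if $(d,r)$ and $(d',r')$ both realise $\alpha$, then $(d-d')\beta=(r'-r)\pi$. When $\beta/\pi=p/q$ in lowest terms with $p<q$, this reads $p(d-d')=q(r'-r)$, and coprimality of $p,q$ forces $q\mid(d-d')$; conversely any shift $(d,r)\mapsto(d+nq,r-np)$ with $n\in\mathbb Z$ gives a valid pair. The residue of $d$ modulo $q$ is therefore fixed, and the symmetric window $\{-(q-1),\dots,q-1\}$ contains at least one representative of every class, so we may impose $|d|<q$. Since $\beta\in(0,\pi)$ forces $p\geqslant 1$ hence $q\geqslant 2$, this normalisation is compatible with the constraint $d\neq 1$ established in Proposition~\ref{prop:decoupl}.

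\textbf{Coincidence analysis, case $d\geqslant 2$.} The first step is to rewrite the roots of $P$ and $Q$ uniformly: the key identity $s_1=s_2\,\boldsymbol{\mathrm{q}}^{d-1}$, which follows from $s_1/s_2=e^{2i(\delta+\varepsilon)}$ and $\delta+\varepsilon=(d-1)\beta+(r+1)\pi$, turns the non-zero roots of $P$ into $\{\boldsymbol{\mathrm{x}}(s_2\boldsymbol{\mathrm{q}}^j):j=0,\dots,d-2\}$ and those of $Q$ into $\{\boldsymbol{\mathrm{y}}(s_2\boldsymbol{\mathrm{q}}^l):l=1,\dots,d-1\}$. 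Next I would apply the invariances~\eqref{eq:invariance}. A coincidence $\boldsymbol{\mathrm{x}}(s_2\boldsymbol{\mathrm{q}}^j)=\boldsymbol{\mathrm{x}}(s_2\boldsymbol{\mathrm{q}}^k)$ with $j\neq k$ reduces to either $\boldsymbol{\mathrm{q}}^{j-k}=1$, excluded by $|j-k|\leqslant d-2<q$, or $s_2^2\boldsymbol{\mathrm{q}}^{j+k}=1$, which with $s_2=e^{i(\theta-2\delta)}$ is equivalent to $\theta-2\delta+(j+k)\beta\equiv 0\pmod{\pi}$, with $j+k$ running over $\{1,\dots,2d-5\}$. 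The symmetric computation for $Q$, using $\boldsymbol{\mathrm{y}}(s)=\boldsymbol{\mathrm{y}}(\boldsymbol{\mathrm{q}}/s)$, produces $s_2^2\boldsymbol{\mathrm{q}}^{l+l'-1}=1$ and hence the same congruence with $l+l'-1\in\{2,\dots,2d-4\}$. The union of these two index sets is exactly $\{1,\dots,2d-4\}$, matching~\eqref{eq:doubleroot}. I would finish by checking separately that the root $0$ from the prefactor cannot coincide with any $\boldsymbol{\mathrm{x}}(s_2\boldsymbol{\mathrm{q}}^j)$ or $\boldsymbol{\mathrm{y}}(s_2\boldsymbol{\mathrm{q}}^l)$ under the constraints $\alpha\geqslant 1$ and $|d|<q$: such a coincidence would force $s_2\boldsymbol{\mathrm{q}}^j$ to hit one of the two $s$-values encoding $\{x=0\}\cap\mathcal S$, and the resulting congruences fall outside the interval $\{1,\dots,2d-4\}$.

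\textbf{Coincidence analysis, case $d\leqslant 0$.} The same scheme applies to~\eqref{eq:PQ2}, except that the values $\boldsymbol{\mathrm{x}}(s_2\boldsymbol{\mathrm{q}}^{n})$ and $\boldsymbol{\mathrm{y}}(s_2\boldsymbol{\mathrm{q}}^{m})$ now label poles rather than roots (here $n\in\{d-1,\dots,-1\}$ and, after using $s_1=s_2\boldsymbol{\mathrm{q}}^{d-1}$, $m\in\{d,\dots,0\}$). The invariance argument produces the same type of congruence $\theta-2\delta+k\beta\equiv 0\pmod{\pi}$, with $k$ ranging over $\{2d-1,\dots,-3\}$ for $P$ and $\{2d,\dots,-2\}$ for $Q$; their union is the interval $\{2d-1,\dots,-2\}$ stated in the lemma.

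The hard part is not any single computation but the combinatorial bookkeeping: lining up the two admissible index ranges (for $j+k$ and $l+l'-1$, or their pole-side analogues) so that their union is a single interval, and verifying that the degenerate collisions involving the root or pole at $0$ do not enlarge that interval. Once this is pinned down, every remaining step reduces to elementary trigonometry on the unit circle $\{|s|=1\}$.
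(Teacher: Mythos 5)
Your argument is correct and follows essentially the same route as the paper: use the uniformization, observe that a coincidence $\boldsymbol{\mathrm{x}}(s_2\boldsymbol{\mathrm{q}}^i)=\boldsymbol{\mathrm{x}}(s_2\boldsymbol{\mathrm{q}}^j)$ can only come from $\boldsymbol{\mathrm{q}}^{i-j}=1$ (excluded by $|d|<q$) or from the reciprocal relation $s_2\boldsymbol{\mathrm{q}}^i=(s_2\boldsymbol{\mathrm{q}}^j)^{-1}$, translate the latter into the congruence $\theta-2\delta+k\beta\equiv 0\ \mathrm{mod}(\pi)$, and check that the index ranges for $P$ and $Q$ union to the stated interval. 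Your extra remark about the prefactor root (or pole) at $0$ is a point the paper's proof does not address explicitly, and your bookkeeping of the ranges $\{1,\dots,2d-5\}$ and $\{2,\dots,2d-4\}$ (resp. their pole-side analogues) agrees with the paper's computation via $2d-i-j-3$.
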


\begin{proof}
Let $d \geqslant 2$. The polynomial $P$ has a double root (or more) if and only if for some $i\ne j\in\{0,\dots, d-2 \}$ we have $\boldsymbol{\mathrm{x}}(s_2\boldsymbol{\mathrm{q}}^i)=\boldsymbol{\mathrm{x}}(s_2\boldsymbol{\mathrm{q}}^j)$. Using the expression of $\boldsymbol{\mathrm{x}}$ given in~\eqref{eq:unif} there are only two ways for this to happen. The first one is that $\boldsymbol{\mathrm{q}}^i=\boldsymbol{\mathrm{q}}^j$, \textit{i.e.} $i\beta=j\beta \ \mathrm{mod}(\pi)$ which is not possible even when $\beta/\pi=p/q\in\mathbb{Q}$ since $|i-j|<|d|<q$. The second one is that $s_2\boldsymbol{\mathrm{q}}^i=(s_2\boldsymbol{\mathrm{q}}^j)^{-1}$, using the value of $s_2$ in~\eqref{eq:si} it is equivalent to $\theta-2\delta+(i+j)\beta\equiv 0 \;\text{mod}(\pi)$. Similarly, $Q$ has a double root if and only if $\theta-2\delta+(2d-i-j-3)\beta\equiv 0\;\text{mod}(\pi)$. The case $d\leqslant 0$ is similar.
\end{proof}
One should observe that the decoupling condition $\alpha\in \mathbb Z+\frac{\pi}{\beta}\mathbb Z$ doesn't depend on $\theta$ (and hence on the drift) while the multiple root condition does.

\subsection{Explicit expression for the Laplace transforms}

We now state our first main result when $\alpha\in\mathbb N$.
\begin{theorem}[Laplace transforms, $\alpha\in\mathbb{N}$]
If $\alpha\in\mathbb N$ then 
the rational function defined by 
\begin{equation}\label{eq:L}
L(x,y):=\frac{k_1(x,y)P(x)+k_2(x,y)Q(y)}{K(x,y)}
\end{equation}
is a polynomial and we have
$$
\varphi_1(y)=\frac{1}{Q(y)},
\quad\quad 
\varphi_2(x)=\frac{1}{P(x)},
\quad\quad 
\varphi(x,y)=\frac{L(x,y)}{P(x)Q(y)}
$$
where $P$ and $Q$ are given in~\eqref{eq:PQ}.
\label{thm:laplace}
\end{theorem}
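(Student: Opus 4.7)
Since $\alpha\in\mathbb N$, we take $r=0$ and $d=\alpha+1\geq 2$ in Proposition~\ref{prop:decoupl}, so that $(P,Q)$ from~\eqref{eq:PQ} is a polynomial decoupling pair of degree $d$, normalised so that $P'(0)=Q'(0)=1$ (the product in~\eqref{eq:PQ} evaluates to $1$ at the origin). The strategy is to show $\varphi_1=1/Q$ via the boundary value problem of Proposition~\ref{prop:BVP}, obtain $\varphi_2=1/P$ by the symmetric argument, derive $\varphi$ from the functional equation~\eqref{eq:functionaleq}, and finally prove the resulting $L$ is a polynomial.

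For the BVP verification, the crucial input is the boundary condition. Given $y\in\mathcal{H}$, both points $(X^+(y),y)$ and $(X^+(y),\bar y)$ lie on $\mathcal{S}$ (the $y$-solutions $Y^\pm$ are complex conjugate for $x\in[x^+,\infty)$), so the decoupling identity~\eqref{eq:decouple} applies to each pair and the common factor $\lambda P(X^+(y))$ cancels in~\eqref{eq:BVP2}, yielding
\[
G(y)=\frac{k_1(X^+(y),y)}{k_2(X^+(y),y)}\cdot\frac{k_2(X^+(y),\bar y)}{k_1(X^+(y),\bar y)}=\frac{Q(y)}{Q(\bar y)}.
\]
Then~\eqref{eq:BVP1} becomes the tautology $1/Q(\bar y)=(Q(y)/Q(\bar y))\cdot 1/Q(y)$. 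The asymptotic~\eqref{eq:asympphi} follows from $\deg Q=\alpha+1$, and the simple pole at $0$ with residue $1$ is enforced by $Q'(0)=1$. For the remaining zeros $\boldsymbol{\mathrm{y}}(s_1/\boldsymbol{\mathrm{q}}^k)$, $k=0,\dots,d-2$, of $Q$, one checks via the parametrisation~\eqref{eq:GHunif} of $\overline{\mathcal{G}}_\mathcal{H}$ together with the values~\eqref{eq:si} that only $\boldsymbol{\mathrm{y}}(s_1)$ can lie inside, and precisely when $2\varepsilon+\theta\geq 2\pi$, matching item~\ref{eq:poleBVP} of Proposition~\ref{prop:BVP}. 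Uniqueness is then standard: if $\psi$ is another solution of the BVP, then $Q\psi-1$ is an invariant in the sense of Definition~\ref{def:inv} (since $Q\cdot\varphi_1$ is invariant under the boundary relation $Q(\bar y)\varphi_1(\bar y)=Q(y)\varphi_1(y)$), has no pole in $\overline{\mathcal{G}}_\mathcal{H}$ (the pole data of $\psi$ and $1/Q$ cancel), and is bounded at infinity, so Lemma~\ref{lemma:Tutte} forces it to be a constant, and matching asymptotic orders makes the constant zero. Hence $\varphi_1=1/Q$, and swapping $x\leftrightarrow y$ gives $\varphi_2=1/P$.

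Substituting into~\eqref{eq:functionaleq} yields $\varphi=L/(PQ)$ with $L$ defined by~\eqref{eq:L}. It remains to prove $L\in\mathbb C[x,y]$, i.e.\ that $K$ divides $k_1P+k_2Q$ in $\mathbb C[x,y]$. By Lemma~\ref{lemma:irreducible}, $K$ is irreducible, so it suffices to show $k_1P+k_2Q$ vanishes identically on $\mathcal S$. On $\mathcal S$, the decoupling~\eqref{eq:decouple} gives $k_2Q=\lambda k_1P$, hence $k_1P+k_2Q=(1+\lambda)k_1P$. The normalisation chosen in~\eqref{eq:PQ} is exactly what makes the constant $\lambda$ of~\eqref{eq:lambda} equal to $-1$, as verified by computing $C_1$ and $C_2$ from Lemma~\ref{lemma:k(s)} and exploiting the identity $s_0's_1=\boldsymbol{\mathrm{q}}^d s_0''s_2$ (the case $r=0$ of the relation derived in the proof of Proposition~\ref{prop:decoupl}).

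The two delicate points are the pole-location check (tracking via~\eqref{eq:GHunif} which roots of $Q$ sit inside $\overline{\mathcal{G}}_\mathcal{H}$, given the invariance $\boldsymbol{\mathrm{y}}(s)=\boldsymbol{\mathrm{y}}(\boldsymbol{\mathrm{q}}/s)$) and the explicit evaluation $\lambda=-1$. A cleaner alternative to the latter is to invoke the analyticity of $\varphi$ on $\{\Re x>0,\Re y>0\}$ as the Laplace transform of a bounded function: any uncancelled factor of $K$ in the denominator of $L/(PQ)$ would produce spurious singularities inside this region, forcing $K\mid k_1P+k_2Q$ without separate computation.
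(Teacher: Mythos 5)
Your overall strategy is the same as the paper's: use the decoupling pair of Proposition~\ref{prop:decoupl} to turn the boundary condition into the invariance $Q\varphi_1(\overline y)=Q\varphi_1(y)$, apply the Liouville-type Lemma~\ref{lemma:Tutte} after checking poles and growth, and then obtain $L\in\mathbb C[x,y]$ from the irreducibility of $K$ (Lemma~\ref{lemma:irreducible}) via the Nullstellensatz. Two steps, however, are not justified as written. First, the normalisation: you claim that ``matching asymptotic orders makes the constant zero''. This cannot work, because the constant $C$ in the asymptotics~\eqref{eq:asympphi} is unspecified, so the limit of $Q\varphi_1$ at infinity is an unknown constant and gives no equation. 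The constant must be pinned down at $y=0$: by the final value theorem and the limit condition~\eqref{eq:limits} one has $\lim_{y\to0}y\varphi_1(y)=\lim_{v\to\infty}\mathbb P_{(0,v)}(T=\infty)=1$, which combined with $Q'(0)=1$ gives $Q\varphi_1\equiv1$. You do assert ``residue $1$ at $0$'' earlier, but Proposition~\ref{prop:BVP} does not contain this datum, so you need to derive it explicitly from~\eqref{eq:limits}; without it your uniqueness argument only yields $\varphi_1=\kappa/Q$ for some constant $\kappa$.

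Second, the identification $\lambda=-1$. You claim it is ``verified by computing $C_1$ and $C_2$ from Lemma~\ref{lemma:k(s)} and exploiting $s_0's_1=\boldsymbol{\mathrm{q}}^d s_0''s_2$'', but that identity only controls products of roots, not the prefactor $\frac{C_1}{C_2}\prod_k\boldsymbol{\mathrm{x}}(s_2\boldsymbol{\mathrm{q}}^k)/\boldsymbol{\mathrm{y}}(s_1/\boldsymbol{\mathrm{q}}^k)$ in~\eqref{eq:lambda}, and no such computation is carried out. The painless route — the one the paper takes — is to note that once $\varphi_1=1/Q$ and $\varphi_2=1/P$ are established, substituting them into the functional equation~\eqref{eq:functionaleq} forces $k_1(x,y)P(x)+k_2(x,y)Q(y)=0$ on $\mathcal S$ (equivalently $\lambda=-1$), after which irreducibility of $K$ gives $K\mid k_1P+k_2Q$. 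Your ``cleaner alternative'' via analyticity of $\varphi$ on $\{\Re x>0,\Re y>0\}$ is essentially this same observation and is sound, provided you note that $\mathcal S$ meets that half-space region in a set that is dense in the irreducible curve $\mathcal S$, so that vanishing there propagates to all of $\mathcal S$. I would promote that alternative to the main argument and drop the claimed direct evaluation of $\lambda$.
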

\begin{proof}
Assuming that $\alpha=d-1\in\mathbb N$, the polynomials $P$ and $Q$ given in~\eqref{eq:PQ} form a decoupling pair by Proposition~\ref{prop:decoupl}. The boundary value problem of Proposition~\ref{prop:BVP} thus implies that $Q\varphi_1$ is an invariant, see~\eqref{eq:Qphi1}.
According to Lemma~\ref{lemma:Tutte}, we only need to prove that $Q\varphi_1$ doesn't have any pole on $\overline{\mathcal{G}}_{\mathcal{H}}$, and has a finite limit as $y$ goes to $+\infty$. By~(\ref{eq:asympphi}) and~(\ref{eq:PQ}), and since $\alpha=d-1$ we have
$$Q(y)\varphi_1(y)\sim \frac{y^d}{\prod_{k=0}^{d-2}-\boldsymbol{\mathrm{y}}(s_1/\boldsymbol{\mathrm{q}}^k)}
\cdot Cy^{-\alpha-1}
=\frac{C}{\prod_{k=0}^{d-2}-\boldsymbol{\mathrm{y}}(s_1/\boldsymbol{\mathrm{q}}^k)}.$$
Furthermore the poles of $\varphi_1$ given in Proposition~\ref{prop:BVP} \textit{i.e.} $0$ and $\boldsymbol{\mathrm{y}}(s_1)$ when $2\varepsilon+\theta\geqslant 2\pi$, are compensated by the zeros of $Q$. Indeed $0$ and $\boldsymbol{\mathrm{y}}(s_1)$ are always roots of $Q$. By Lemma~\ref{lemma:Tutte}, there exists $\kappa\in\mathbb C$ such that $Q\varphi \equiv\kappa$. On the one hand, using the fact that $Q'(0)=1$ gives
$$\lim_{y\to 0}y\varphi_1(y)=\kappa\lim_{y\to 0}\frac{y}{Q(y)}=\frac{\kappa}{Q'(0)}=\kappa.$$
On the other hand, by the final value theorem and~\eqref{eq:limits} we have
\begin{equation}
\lim_{y\to 0}y\varphi_1(y)=\lim_{v\to +\infty}\mathbb P_{(0,v)}(T=+\infty)=1.
\label{eq:tvf}
\end{equation}
Hence $\kappa=1$ and $\varphi_1=1/Q$. The same method also works to show that $\varphi_2=1/P$. 
Replacing $\varphi_1$ and $\varphi_2$ in the functional equation~\eqref{eq:functionaleq}, one can obtain
\begin{equation}
\frac{k_2(x,y)}{k_1(x,y)}=-\frac{P(x)}{Q(y)},\quad \text{for all }(x,y)\in\mathcal{S}.
\end{equation}
Comparing with Definition~\ref{def:decoupling} we can see that the constant $\lambda$ given in \eqref{eq:lambda} is equal to $-1$.
The polynomial $k_2Q+ k_1P$ vanishes on $\mathcal{S}$ the set of the zeros of $K$. By Hilbert's Nullstellensatz~\cite{hartshorne}, 
$$k_2Q+ k_1 P\in \sqrt{(K)}$$
where 
$(K):=\{LK, L\in\mathbb C[X,Y]\}\text{ and }\sqrt{(K)}:=\{H\in\mathbb C[X,Y] : \exists m \in \mathbb N\text{ s.t. }H^m\in (K)\}$
are the ideal generated by $K$ and its radical. \nb{By Lemma~\ref{lemma:irreducible},} $K$ is irreducible 
which implies that $\sqrt{(K)}=(K)$. This shows that there exists $L\in \mathbb C[X,Y]$ such that
$L(x,y)K(x,y)= k_1(x,y)P(x)+k_2(x,y)Q(y).$
We see that $L(x,y)\in \mathbb R$ for all $(x,y)\in\mathbb R^2$, so the coefficients of $L$ must be real. Substituting the values for $\varphi_1$ and $\varphi_2$ into the functional equation \eqref{eq:functionaleq} yields $\varphi(x,y)=L(x,y)/(P(x)Q(y))$.
\end{proof}

We now state a lemma useful to obtain our second main result which deals with the case where $\alpha\in\mathbb Z+\frac{\pi}{\beta}\mathbb Z$. We consider $d$ and $r\in\mathbb{Z}$ such that $\alpha=(d-1)+r\pi/\beta$. 
If $\beta/\pi=p/q\in\mathbb{Q}$, with $p$ and $q$ relatively prime, we (can) choose $|d|<q$.
For further use, we need to study the number of zeros and poles of $Q\varphi_1$ which belong to $\overline{\mathcal{G}}_\mathcal{H}$. First of all, we can see that $0$ is always a root of $Q$ and a pole of $\varphi_1$, which therefore compensate each other considering $Q\varphi_1$. 

When $d \geqslant 2$ we denote 
\begin{equation}
\mathrm{Z}:= \{\boldsymbol{\mathrm{y}}(s_1/\boldsymbol{\mathrm{q}}^k ) \in \overline{\mathcal{G}}_\mathcal{H}: k=1,\dots,d-2 \}
\label{eq:Zdef}
\end{equation} 
which is a set containing (all the) zeros of $Q\varphi_1$ in $\overline{\mathcal{G}}_\mathcal{H}$, 
see~\eqref{eq:PQ}. Note that in this definition $k$ cannot be taken equal to $0$ since when $\boldsymbol{\mathrm{y}}(s_1)$ is both a pole of $\varphi_1$ in $\overline{\mathcal{G}}_\mathcal{H}$ and a zero of $Q$ they compensate each other, by item~\ref{eq:poleBVP} of Proposition~\ref{prop:BVP}. 

When $d \leqslant 0$ we denote 
\begin{equation}
\mathrm{P}:=\{\boldsymbol{\mathrm{y}}(s_1 \boldsymbol{\mathrm{q}}^k) \in \overline{\mathcal{G}}_\mathcal{H} : k=0,\dots,1-d \}
\label{eq:Pdef}
\end{equation}
which is the set of poles of $Q\varphi_1$ in $\overline{\mathcal{G}}_\mathcal{H}$,  
see~\eqref{eq:PQ2}. Note that in this definition $k$ can be taken equal to $0$ since $\boldsymbol{\mathrm{y}}(s_1)$ is a pole of $\varphi_1$ which can belongs to $\overline{\mathcal{G}}_\mathcal{H}$, see item~\ref{eq:poleBVP} of Proposition~\ref{prop:BVP}. See Figure~\ref{fig:general} to visualize $\mathrm{P}$ and $\mathrm{Z}$.

\begin{figure}[t]
\centering
\includegraphics[width=0.78\linewidth]{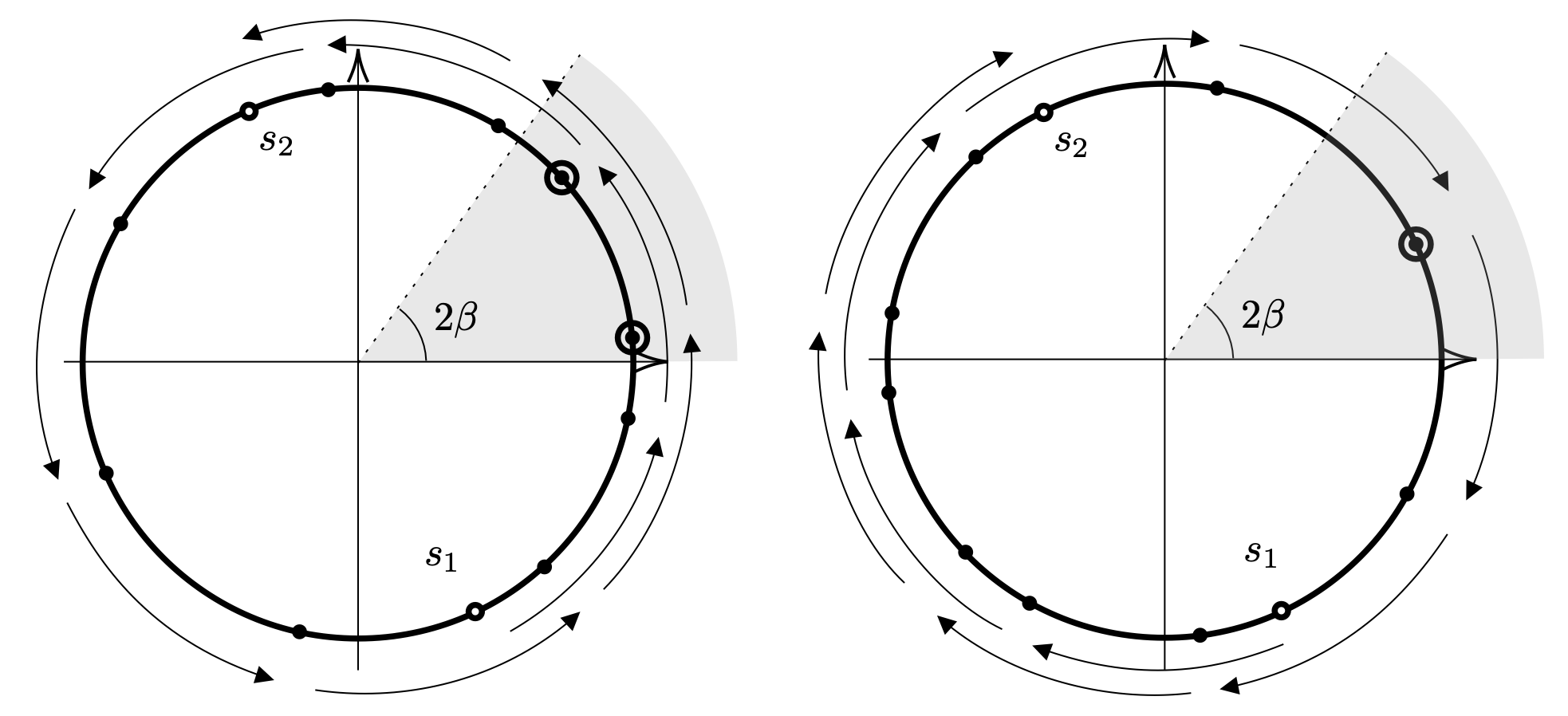}
\caption{Enumeration of $\mathrm{P}$ and $\mathrm{Z}$ for $(d,r)=(-9,2)$ (on the left) and $(d,r)=(11,-1)$ (on the right). The shaded area corresponds to $\boldsymbol{\mathrm{y}}^{-1}(\overline{\mathcal{G}}_\mathcal{H})$.}
\label{fig:general}
\end{figure}

\begin{lemma}[Cardinal of $\mathrm{Z}$ and $\mathrm{P}$]
Let $\alpha\geqslant 1$ and assume that $\alpha\in\mathbb Z+\frac{\pi}{\beta}\mathbb Z$ and~\eqref{eq:doubleroot} holds. Let $d$ and $r\in\mathbb{Z}$ such that $\alpha=(d-1)+r\pi/\beta$, \textit{i.e.} $(d-1)\beta+(r+1)\pi=\delta+\varepsilon$. Then $d\neq 1$ and we have 
\begin{enumerate}
\item[(i)] If $d\geqslant 2$ then $r\leqslant 0$ and we have
\begin{equation*}
    \mathrm{Card}(\mathrm{Z})= -r. 
     \end{equation*}
\item[(ii)] If $d\leqslant 0$ then $r>0$ and we have
 \begin{equation*}
     \mathrm{Card}(\mathrm{P})= r. 
     \end{equation*}
\end{enumerate}
\label{lemma:count}
\end{lemma}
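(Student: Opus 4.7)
I divide the proof into four short steps: a sign analysis of $r$, a geometric reduction, a telescoping count, and an endpoint computation.

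First, plugging $\delta+\varepsilon=(d-1)\beta+(r+1)\pi$ into the two-sided bound $\beta+\pi\leq\delta+\varepsilon<2\pi$ (which comes respectively from $\alpha\geq 1$ and from $\delta,\varepsilon\in(0,\pi)$) gives $(d-2)\beta+r\pi\geq 0$ and $(d-1)\beta+r\pi<\pi$. When $d\leq 0$ the first yields $r\geq(2-d)\beta/\pi>0$, hence $r\geq 1$; when $d\geq 2$ the second yields $r<1-(d-1)\beta/\pi\leq 0$, hence $r\leq 0$. The case $d=1$ is excluded because it would force $\delta+\varepsilon=(r+1)\pi\in\pi\mathbb{Z}$, incompatible with $\delta+\varepsilon\in[\beta+\pi,2\pi)$.

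Next, by the uniformization~\eqref{eq:unif}, the invariance $\boldsymbol{\mathrm{y}}(s)=\boldsymbol{\mathrm{y}}(\boldsymbol{\mathrm{q}}/s)$, and the description~\eqref{eq:GHunif} of $\overline{\mathcal{G}}_\mathcal{H}$, a unit-modulus point $e^{i\phi}$ satisfies $\boldsymbol{\mathrm{y}}(e^{i\phi})\in\overline{\mathcal{G}}_\mathcal{H}$ if and only if $\phi\bmod 2\pi\in[0,2\beta]$. Setting $\gamma:=\beta/\pi\in(0,1)$ and $x_k:=(\theta+2\varepsilon\mp 2k\beta)/(2\pi)$ (minus in case (i), plus in case (ii)), each cardinality becomes the number of indices $k$ in the prescribed range such that $\{x_k\}\in[0,\gamma]$. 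Since $(x_k)$ is arithmetic with step of absolute value $\gamma\in(0,1)$, under~\eqref{eq:doubleroot} (which excludes $\{x_k\}\in\{0,\gamma\}$) the indicator $\mathbf{1}\{\{x_k\}<\gamma\}$ coincides with the floor jump between consecutive terms in the direction of increase. Telescoping then yields
\begin{equation*}
\mathrm{Card}(\mathrm{Z})=\lfloor x_1\rfloor-\lfloor x_{d-1}\rfloor \quad \text{in case (i)}, \qquad \mathrm{Card}(\mathrm{P})=\lfloor x_{1-d}\rfloor-\lfloor x_{-1}\rfloor \quad \text{in case (ii)},
\end{equation*}
where the second formula is obtained by extending the sum to the index $k=-1$.

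Finally, using $2\varepsilon=2(d-1)\beta+2(r+1)\pi-2\delta$, the two ``far'' endpoints $x_{d-1}$ and $x_{1-d}$ both equal $(r+1)+(\theta-2\delta)/(2\pi)$, whereas $x_1=x_{-1}=(\theta+2\varepsilon-2\beta)/(2\pi)$. The hypotheses $\alpha\geq 1$, $\delta,\varepsilon\in(0,\pi)$, $\theta\in(0,\beta)$ rule out the three alternatives $\theta\geq 2\delta$, $\theta+2\varepsilon<2\beta$, and $\theta+2\varepsilon\geq 2\beta+2\pi$ (each forces $\varepsilon\geq\pi$ or $\alpha<1$), so necessarily $\theta-2\delta\in(-2\pi,0)$ and $\theta+2\varepsilon-2\beta\in[0,2\pi)$. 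This gives $\lfloor x_{d-1}\rfloor=\lfloor x_{1-d}\rfloor=r$ and $\lfloor x_1\rfloor=\lfloor x_{-1}\rfloor=0$, hence $\mathrm{Card}(\mathrm{Z})=-r$ and $\mathrm{Card}(\mathrm{P})=r$. The main technical obstacle is keeping sign conventions straight across the two cases (case (ii) requires shifting the telescoped index) and checking that~\eqref{eq:doubleroot}, together with the stated ranges of the angles, does exclude every boundary configuration threatening the strict telescoping dichotomy.
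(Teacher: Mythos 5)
Your proof is correct and takes essentially the same route as the paper: both reduce the count to the number of points $s_1/\boldsymbol{\mathrm{q}}^{\pm k}$ whose argument falls in the arc $[0,2\beta]$ and identify that number with the number of complete revolutions ($|r|$) performed when rotating from $s_1$ to $s_2$; your floor-function telescoping is just a more explicit arithmetic rendering of the paper's revolution-counting argument (for which the paper cites mechanical/Sturmian sequences). One harmless slip: in the sign analysis for $d\geqslant 2$ the intermediate claim $1-(d-1)\beta/\pi\leqslant 0$ is false in general (e.g.\ $d=2$), but the conclusion $r\leqslant 0$ still follows since $r<1-(d-1)\beta/\pi<1$ and $r\in\mathbb Z$.
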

\begin{proof}
Using the fact that $\varepsilon,\delta\in (0,\pi)$, $\beta\in(0,\pi)$ and $\alpha\geqslant 1$ it is easy to see that $d$ cannot be equal to $1$, that $d\geqslant 2$ implies $r\leqslant 0$ and that $d<1$ implies $r>0$.  
\begin{enumerate}
\item[(i)] Assume that $d\geqslant 2$ and $r\leqslant 0$.
Recalling equation~\eqref{eq:GHunif} and noticing that for all $k\in\mathbb{Z}$, $s_1/\boldsymbol{\mathrm{q}}^k\in \mathbb U$ (where $\mathbb U$ is the complex unit circle) we define
\begin{equation*}
C_0:=\{e^{ib}, b\in[0,2\beta]\}=\boldsymbol{\mathrm{y}}^{-1}(\overline{\mathcal{G}}_\mathcal{H})\cap \mathbb U
\end{equation*}
and we have
$$\text{Card}(\mathrm{Z})=\text{Card}\Big(\{s_1/\boldsymbol{\mathrm{q}}^k \in C_0 : k=1,\dots , d-2\}\Big).$$
We recall that $s_1=\boldsymbol{\mathrm{q}}^{d-1}s_2$, and so we need to count the number of points $s_1/\boldsymbol{\mathrm{q}}^k$ for $k=1,\dots, d-2$ which have their argument in $(0,2\beta)$ modulo $2\pi$.
These points can be obtained by making $d-1$ successive rotations of angle $-2\beta$, starting from $s_1$ to $s_2$ (without taking into account $s_1$ and $s_2$). By doing this, the number of complete revolutions around the unit circle in the clockwise direction is $-r$. This comes from the fact that, denoting $\arg s_1=\theta+2\varepsilon\in(\theta,\theta+2\pi)$ and $\arg s_2=\theta-2\delta+2\pi\in(\theta,\theta+2\pi)$, we have 
$$\arg s_2 - \arg s_1=(d-1)(-2\beta) - r (2\pi)>0.$$ 
Since $0<\theta<\beta$, there are exactly $-r$ points $s_1/\boldsymbol{\mathrm{q}}^k$ for $k=1,\dots, d-2$ which have their argument in $(0,2\beta)$ modulo $2\pi$, see Figure~\ref{fig:general}. Interested readers may refer to the study of mechanical or Sturmian  sequences~\cite{Loth-02} where this kind of counting problem is standard.

\item[(ii)] The case $d\leqslant 0$ and $r>0$ is similar 
considering $1-d$ successive rotations of angle $2\beta$ from $s_1$ to $s_2$ making $r$ turn around the unit circle in the counter-clockwise direction.
\end{enumerate}
\end{proof}

We now state our second main result about $\varphi_1$ when $\alpha\in\mathbb Z+\frac{\pi}{\beta}\mathbb Z$.
A symmetrical result holds for $\varphi_2$, and $\varphi$ can thus be determined by~\eqref{eq:functionaleq}.
\begin{theorem}[Laplace transforms, $\alpha\in\mathbb Z+\frac{\pi}{\beta}\mathbb Z$]
Assume that $\alpha\in\mathbb Z+\frac{\pi}{\beta}\mathbb Z$ and $\alpha \geqslant 1$ and the simple root condition~\eqref{eq:doubleroot} holds. Let $d$ and $r\in\mathbb{Z}$ such that $\alpha=(d-1)+r\pi/\beta$, \textit{i.e.} $(d-1)\beta+(r+1)\pi=\delta+\varepsilon$, then
\begin{equation*}
\varphi_1(y)=\frac{S(w(y)) }{Q(y)},
\end{equation*} 
where $S$ is a rational function of degree $-r$ given by
\begin{equation}
    S(z):=\displaystyle\prod_{q\in \mathrm{Z}}\frac{z-w(q)}{w(0)-w(q)}, \text{ if } d\geqslant 2
    \quad \text{and} \quad
    S(z):=\displaystyle\prod_{p\in \mathrm{P}}\frac{w(0)-w(p)}{z-w(p)}, \text{ if } d\leqslant 0 .
    \label{eq:Sdef}
\end{equation}
We deduce sufficient conditions for $\varphi_1$, $\varphi_2$ and $\varphi$ to belongs to the hierarchy~\eqref{eq:hierarchy}.
If $\alpha\in\mathbb{N}$ these Laplace transforms are rational, if $\pi/\beta\in\mathbb{Q}$ and $\alpha\in\mathbb Z+\frac{\pi}{\beta}\mathbb Z$ they are algebraic, if $\alpha\in\mathbb N+\frac{\pi}{\beta}\mathbb Z$ they are D-finite and if  $\alpha\in\mathbb Z+\frac{\pi}{\beta}\mathbb Z$ they are D-algebraic.
\label{thm:laplace2}
\end{theorem}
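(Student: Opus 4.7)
The plan is to mirror closely the proof of Theorem~\ref{thm:laplace}, but to absorb the ``excess'' poles or zeros of $Q\varphi_1$ in $\overline{\mathcal{G}}_\mathcal{H}$ by dividing by a suitable rational function of the canonical invariant $w$. Since $\alpha \in \mathbb{Z}+\frac{\pi}{\beta}\mathbb{Z}$, Proposition~\ref{prop:decoupl} gives a rational decoupling pair $(P,Q)$, and the boundary value problem (Proposition~\ref{prop:BVP}) together with~\eqref{eq:Qphi1} implies that $Q\varphi_1$ is an invariant in the sense of Definition~\ref{def:inv}. The function $w$, being a conformal gluing function, is also an invariant, and so is any rational expression $S(w)$; in particular $\Psi:=Q\varphi_1/S(w)$ is an invariant wherever it is well-defined.

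The heart of the argument is then to show that the rational function $S$ given by~\eqref{eq:Sdef} is exactly the one needed so that $\Psi$ satisfies the hypotheses of the Invariant Lemma~\ref{lemma:Tutte}. Here the two cases $d\geq 2$ and $d\leq 0$ must be treated separately. In the case $d\geq 2$, one has $r\leq 0$, the function $Q\varphi_1$ is analytic on $\overline{\mathcal{G}}_\mathcal{H}$ (the poles of $\varphi_1$ at $0$ and possibly $\boldsymbol{\mathrm{y}}(s_1)$ are compensated by the corresponding roots of $Q$, as in the proof of Theorem~\ref{thm:laplace}), and its zeros in $\overline{\mathcal{G}}_\mathcal{H}$ are precisely the $-r$ points of $\mathrm{Z}$, as counted by Lemma~\ref{lemma:count}(i). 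Since $w$ is bijective from $\mathcal{G}_\mathcal{H}$ to $\mathbb{C}\setminus(-\infty,-1]$, the polynomial $S(w)$ with $\deg S = -r$ vanishes exactly at those same points, so $\Psi$ is analytic (no poles) on $\overline{\mathcal{G}}_\mathcal{H}$. In the case $d\leq 0$, one has $r>0$, the poles of $Q\varphi_1$ in $\overline{\mathcal{G}}_\mathcal{H}$ are precisely the $r$ points of $\mathrm{P}$ (Lemma~\ref{lemma:count}(ii)), and $S$ is chosen so that $S(w)$ has simple poles exactly at those points, again giving an invariant $\Psi$ without poles on $\overline{\mathcal{G}}_\mathcal{H}$.

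To invoke the Invariant Lemma one still needs $\Psi$ to admit a finite limit at $+\infty$. This is where the degree of $S$ is determined: from~\eqref{eq:asympphi} and the definition of $Q$ one gets $Q(y)\varphi_1(y)\sim \mathrm{const}\cdot y^{d-1-\alpha}= \mathrm{const}\cdot y^{-r\pi/\beta}$, while~\eqref{eq:asympw} gives $S(w(y))\sim \mathrm{const}\cdot y^{-r\pi/\beta}$ precisely because $\deg S=-r$. Thus $\Psi$ has a finite limit at infinity, and Lemma~\ref{lemma:Tutte} yields $\Psi\equiv\kappa$ for some constant $\kappa\in\mathbb{C}$. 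To identify $\kappa$, I would use the final value theorem exactly as in~\eqref{eq:tvf}: since $S(w(0))=1$ by construction and $Q'(0)=1$, the limit $\lim_{y\to 0} y\varphi_1(y) = \kappa$ must equal $1$, hence $\kappa=1$ and $\varphi_1=S(w)/Q$. The symmetric argument gives the corresponding formula for $\varphi_2$, and the expression for $\varphi$ then follows directly from the functional equation~\eqref{eq:functionaleq}.

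Finally, the hierarchy statement is a direct consequence of the formula $\varphi_1=S(w)/Q$ together with the algebraic/differential properties of $w$ recalled just after~\eqref{eq:asympw}: when $\alpha\in\mathbb{N}$, we have $r=0$, $S\equiv 1$ and we recover the rational formula of Theorem~\ref{thm:laplace}; when $\pi/\beta\in\mathbb{Q}$ the function $w$ is algebraic, so $S(w)/Q$ is algebraic; when $\alpha\in\mathbb{N}+\frac{\pi}{\beta}\mathbb{Z}$ one may choose $d\geq 2$, so $S$ is a polynomial and $S(w)$ is D-finite (D-finiteness is stable under polynomial composition with D-finite functions); in the general case $\alpha\in\mathbb{Z}+\frac{\pi}{\beta}\mathbb{Z}$, the function $1/w$ is D-algebraic, hence so is $S(w)$ and therefore so is $\varphi_1$. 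The main technical obstacle is the bookkeeping in step three: one must verify carefully, using the simple root condition~\eqref{eq:doubleroot} and Lemma~\ref{lemma:count}, that the zeros/poles of $Q\varphi_1$ in $\overline{\mathcal{G}}_\mathcal{H}$ match, with multiplicities and positions, those of $S(w)$, and that no spurious cancellation or accumulation occurs at the boundary point $\boldsymbol{\mathrm{y}}(s_1)$.
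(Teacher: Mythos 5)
Your proposal is correct and follows essentially the same route as the paper: you form the invariant $(Q\varphi_1)/(S\circ w)$, use Lemma~\ref{lemma:count} and the definitions of $\mathrm{Z}$ and $\mathrm{P}$ to check it has no poles on $\overline{\mathcal{G}}_\mathcal{H}$, match the asymptotics $y^{d-\alpha-1}=y^{-r\pi/\beta}$ against $S(w(y))\sim \mathrm{const}\cdot y^{-r\pi/\beta}$, apply the Invariant Lemma~\ref{lemma:Tutte}, and identify $\kappa=1$ via the final value theorem. The hierarchy deductions from the properties of $w$ also coincide with the paper's argument.
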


\begin{proof}
Recall the definitions of 
$\mathrm{Z}$ in \eqref{eq:Zdef}, $\mathrm{P}$ in \eqref{eq:Pdef} and $Q$ in~\eqref{eq:PQ} and \eqref{eq:PQ2}.
The function $ (Q\varphi_1)/(S\circ w)$ is continuous on $\mathcal{H}$ and meromorphic on $\mathcal{G}_\mathcal{H}$. By~\eqref{eq:Qphi1} and~\eqref{eq:winv}, we have for all $y\in\mathcal{H}$,
$$\frac{Q\varphi_1(\overline{y})}{S(w (\overline{y}))}=\frac{Q\varphi_1(y)}{S(w(y))}.$$
The function $(Q\varphi_1)/(S\circ w)$ is then an invariant in the sense of Definition \ref{def:inv}. Recall that $\text{deg}(Q)=d$, $\text{deg}(S)=-r$ by Lemma~\ref{lemma:count}, $\varphi_1 (y)  \sim C y^{-\alpha-1}$ by \eqref{eq:asympphi}, $w(y)\sim  \hat{C}y^{\pi/\beta}$ by \eqref{eq:asympw}. For a constant $\kappa$ we obtain when $y\to\infty$,
\begin{equation*}
\frac{Q\varphi_1(y)}{S(w(y))}\sim \kappa  \frac{y^{d} y^{-\alpha-1}}{y^{-r \pi/\beta}} = \kappa
\end{equation*}
where the last equality comes from $\alpha=(d-1)+r\pi/\beta$.
By construction, $(Q\varphi_1)/(S\circ w)$ does not have any pole on $\overline{\mathcal{G}}_\mathcal{H}$. Indeed if $d\geqslant 2$ the roots of $Q$ compensate the poles of $1/(S\circ w)$ and if $d\leqslant 0$ the zeros of $1/(S\circ w)$ compensate the poles of $Q\varphi_1$. Then, the invariant Lemma~\ref{lemma:Tutte} assures that
$$\frac{Q\varphi_1(y)}{S(w(y))}=\kappa.$$
Applying again the final value theorem~\eqref{eq:tvf} and using the fact that $\lim_{y\to 0}Q(y)/y=Q'(0)=1$ and $S(w(0))=1$ we obtain the value of the constant: 
$ \kappa=\frac{Q(y)}{y} \frac{y\varphi_1(y)}{ S( w(y))}
\underset{y\to 0}{\longrightarrow} 1=\kappa.$
The sufficient conditions given in the theorem therefore follow from the properties of $w$ stated below~\eqref{eq:asympw}.
\end{proof}

\section{Absorption probability via compensation approach}
\label{sec:compensation}

This section deals with the case where $\alpha\in\mathbb{N}$. The aim is to show that the absorption probability is a sum of exponentials and to calculate precisely all the coefficients of this sum. To that end we invert the Laplace transforms and we explain the recursive compensation phenomenon which appears in this sum.

\paragraph{Inverse Laplace transform}

When $\alpha\in\mathbb{N}$ and the simple root condition~\eqref{eq:doubleroot} holds, we invert the Laplace transforms $\varphi_1$ and $\varphi_2$ obtained in Theorem~\ref{thm:laplace} by performing a partial fraction decomposition. Therefore, remembering that $\varphi_1$ and $\varphi_2$ are defined in~\eqref{eq:phi1phi2} as the Laplace transforms of the escape probability on the boundaries, the absorption probabilities starting from the boundaries can be written as sum-of-exponential and are explicitly given by
\begin{equation}\label{eq:univar}
\mathbb P_{(u,0)}(T<\infty)=\sum_{i=0}^{\alpha-1} d_i \exp\left(\boldsymbol{\mathrm{x}}(s_2\boldsymbol{\mathrm{q}}^i)u\right),\quad \mathbb P_{(0,v)}(T<\infty)=\sum_{j=0}^{\alpha-1} e_j\exp\left(\boldsymbol{\mathrm{y}}(s_1/\boldsymbol{\mathrm{q}}^j)v\right),
\end{equation}
where
\begin{equation*}
d_i=\frac{-1}{P'(\boldsymbol{\mathrm{x}}(s_2\boldsymbol{\mathrm{q}}^i))},\quad e_j=\frac{-1}{Q'(\boldsymbol{\mathrm{y}}(s_1/\boldsymbol{\mathrm{q}}^j))}.
\end{equation*}
However, inverting the bivariate Laplace transform $\varphi(x,y)$ and computing the coefficients involved is not immediately obvious. 
We now state the last main result of this article.

\begin{theorem}[Sum-of-exponential absorption probability]
Let $(Z_t)_{0 \leqslant t \leqslant T}$ a reflected Brownian motion in the quadrant of drift $\mu\in \mathbb{R}_+^2$, such that $\alpha\geqslant 1$, starting from $(u,v)\in \mathbb{R}_+^2$, where $T$ is the first hitting time of the vertex. We assume that for all $j\in\{1,\dots,2\alpha-2\}$, $\theta-2\delta+j\beta\not\equiv 0\;\mathrm{ mod}(\pi)$. The following statements are equivalent:
\begin{itemize}
\item[(i)] $\alpha=n$, for some integer $n\geqslant 1$  ;
\item[(ii)] there exist coefficients $a_1,\dots,a_{2n-1},b_1,\dots,b_{2n-1},c_1 , \dots , c_{2n-1}$ such that
\begin{equation}\label{eq:sumexp}
\mathbb{P}_{(u,v)}(T<\infty)= \sum_{k=1}^{2n-1} c_k \exp(a_ku+b_kv).
\end{equation}
\end{itemize}
In this case, the constants $a_k$ and $b_k$ are given by 
\begin{equation}\label{eq:akbk}
(a_{2k},b_{2k}) :=\left( \boldsymbol{\mathrm{x}}\left(s_1/ \boldsymbol{\mathrm{q}}^{k}\right)
,
\boldsymbol{\mathrm{y}}\left(s_1/\boldsymbol{\mathrm{q}}^{k}\right) \right)
\quad\text{and}\quad
(a_{2k+1},b_{2k+1}) :=\left( \boldsymbol{\mathrm{x}}\left(s_1/ \boldsymbol{\mathrm{q}}^{k+1}\right)
,
\boldsymbol{\mathrm{y}}\left(s_1/\boldsymbol{\mathrm{q}}^{k}\right)\right)
\end{equation}
and can also be computed thanks to the recurrence relationship stated in Proposition~\ref{rem:recakbk}.
The coefficients $c_k$ are determined by the recurrence relationship given in Proposition~\ref{prop:comp2}.
\label{thm:explicitproba}
\end{theorem}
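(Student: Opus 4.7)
The plan is to prove the two implications separately: (ii) $\Rightarrow$ (i) by an asymptotic argument on the boundary Laplace transform, and (i) $\Rightarrow$ (ii) by an explicit compensation construction whose validity is secured by uniqueness of the bounded solution to the PDE problem recalled just before~\eqref{eq:PDE}.

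For (ii) $\Rightarrow$ (i), I would restrict~\eqref{eq:sumexp} to $u=0$, regroup the terms sharing the same value of $b_k$, and compute
$$\varphi_1(y) = \frac{1}{y} - \int_0^{\infty} f(0,v)\,e^{-yv}\,\mathrm{d}v,$$
which is then a rational function of $y$. Its asymptotic behaviour at $+\infty$ is an integer power of $1/y$; comparing with the tight estimate $\varphi_1(y)\sim C y^{-\alpha-1}$ from~\eqref{eq:asympphi} forces $\alpha+1$ to be a positive integer, and since $\alpha\geqslant 1$ this gives $\alpha\in\mathbb{N}$.

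For the substantive direction (i) $\Rightarrow$ (ii), I would first check that each pair $(a_k,b_k)$ in~\eqref{eq:akbk} belongs to the vanishing set $\mathcal{S}$, so that the corresponding exponential satisfies $\mathcal{G}g=0$: for even indices this is immediate from the uniformization~\eqref{eq:unif}, and for odd indices it follows from the invariances~\eqref{eq:invariance}, since $\boldsymbol{\mathrm{y}}(s_1/\boldsymbol{\mathrm{q}}^{\ell})=\boldsymbol{\mathrm{y}}(\boldsymbol{\mathrm{q}}^{\ell+1}/s_1)$ and $\boldsymbol{\mathrm{x}}(s_1/\boldsymbol{\mathrm{q}}^{\ell+1})=\boldsymbol{\mathrm{x}}(\boldsymbol{\mathrm{q}}^{\ell+1}/s_1)$. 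Plugging the ansatz $f(u,v)=\sum c_k e^{a_k u+b_k v}$ into the $v$-axis Neumann condition~\eqref{eq:neumann} yields
$$\sum_{k=1}^{2n-1}c_k(a_k-r_1 b_k)\,e^{b_k v}=0,$$
and the list of $b_k$'s shows that the distinct values are $\boldsymbol{\mathrm{y}}(s_1),\boldsymbol{\mathrm{y}}(s_1/\boldsymbol{\mathrm{q}}),\dots,\boldsymbol{\mathrm{y}}(s_1/\boldsymbol{\mathrm{q}}^{n-1})$, grouped as $\{k=1\}$ and $\{k=2\ell,\,2\ell+1\}$ for $\ell=1,\dots,n-1$. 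Each paired group gives a single linear relation between two consecutive $c_k$, and the $u$-axis condition analogously pairs indices by common $a_k$. Together, these furnish the two-step recurrence on $(c_k)$ announced in Proposition~\ref{prop:comp2}, the simple-root condition~\eqref{eq:doubleroot} ensuring that no denominator vanishes. The only remaining free parameter is fixed by $\sum c_k=\mathbb{P}_{(0,0)}(T<\infty)=1$, and uniqueness of the bounded solution to the PDE + Neumann + limit problem then identifies the constructed sum with $\mathbb{P}_{(u,v)}(T<\infty)$.

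The hard part, and the step that really uses $\alpha\in\mathbb{N}$, is showing that the two \emph{singleton} groups $\{k=1\}$ and $\{k=2n-1\}$ are compatible with the boundary conditions, namely that $a_1-r_1 b_1=0$ and $b_{2n-1}-r_2 a_{2n-1}=0$ automatically. I would prove $a_1=r_1 b_1$ by combining $k_1(\boldsymbol{\mathrm{x}}(s_1),\boldsymbol{\mathrm{y}}(s_1))=0$ from Lemma~\ref{lemma:k(s)} with the Vieta sum relation for the two $x$-roots of the quadratic $K(x,\boldsymbol{\mathrm{y}}(s_1))=0$, which are $\boldsymbol{\mathrm{x}}(s_1)$ and $\boldsymbol{\mathrm{x}}(\boldsymbol{\mathrm{q}}/s_1)=\boldsymbol{\mathrm{x}}(s_1/\boldsymbol{\mathrm{q}})=a_1$; the $\sigma_{12}$ and $\mu_1$ contributions cancel, leaving exactly $a_1=r_1 b_1$. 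The twin identity $b_{2n-1}=r_2 a_{2n-1}$ uses $k_2(\boldsymbol{\mathrm{x}}(s_2),\boldsymbol{\mathrm{y}}(s_2))=0$ together with $s_1/\boldsymbol{\mathrm{q}}^{n}=s_2$, which is precisely the translation of the hypothesis $\alpha=d-1=n\in\mathbb{N}$. It is this telescoping between $s_1$ and $s_2$ across $n$ steps of multiplication by $\boldsymbol{\mathrm{q}}^{-1}$ that closes the compensation loop and selects the integer values of $\alpha$.
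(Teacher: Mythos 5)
Your (ii) $\Rightarrow$ (i) argument is exactly the paper's: restrict to the boundary, observe that $\varphi_1$ is rational with an integer-power decay, and compare with the asymptotics~\eqref{eq:asympphi}. That part is fine.

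For (i) $\Rightarrow$ (ii) you take a genuinely different route, and it contains a genuine gap. You build a candidate $h(u,v)=\sum_k c_k e^{a_ku+b_kv}$ by compensation (each $(a_k,b_k)$ on the ellipse $K=0$, the $c_k$ chosen so the Neumann conditions~\eqref{eq:neumann} hold pairwise, the loop closed by $k_1^*(a_1,b_1)=0$ and $k_2^*(a_{2n-1},b_{2n-1})=0$, the latter indeed being where $\alpha\in\mathbb N$ enters), and then you declare $h=\mathbb P_{(u,v)}(T<\infty)$ by ``uniqueness of the bounded solution to the PDE + Neumann + limit problem.'' No such uniqueness statement is available at this level of generality: the uniqueness results for this oblique-derivative problem require the solution to be positive (and one must also check boundedness and the limit conditions~\eqref{eq:limits}, i.e.\ that all $a_k,b_k$ have negative real part), and the positivity of the compensation sum is not at all obvious --- the signs of the $c_k$ alternate by construction. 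The paper explicitly treats the compensation construction only as a heuristic for precisely this reason, and instead proves the implication by a different mechanism: Theorem~\ref{thm:laplace} gives $\varphi(x,y)=L(x,y)/(P(x)Q(y))$ rigorously (via the functional equation~\eqref{eq:functionaleq} and the boundary value problem of Proposition~\ref{prop:BVP}), a partial fraction decomposition and Laplace inversion then yield a representation of $f$ as a double sum of $\alpha^2$ exponentials $\exp(\boldsymbol{\mathrm{x}}(s_1/\boldsymbol{\mathrm{q}}^i)u+\boldsymbol{\mathrm{y}}(s_1/\boldsymbol{\mathrm{q}}^j)v)$, and applying $\mathcal G$ to this identity together with linear independence of the exponentials forces $\tilde c_{i,j}=0$ unless $i\in\{j,j+1\}$, leaving the $2\alpha-1$ announced terms. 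In that argument the identification with the probability is automatic because one starts from its Laplace transform; nothing needs to be re-derived from a uniqueness principle.

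To repair your proof you would either have to (a) supply and verify the hypotheses of a genuine uniqueness theorem for~\eqref{eq:PDE}--\eqref{eq:limits}, including positivity of your candidate, which is a substantial open-ended task, or (b) replace the uniqueness step by the paper's inversion of $\varphi=L/(PQ)$, at which point the compensation computation becomes what it is in the paper: a derivation of the recurrences for $(a_k,b_k)$ and $(c_k)$ and an a posteriori explanation of the structure of the answer, rather than the proof that the answer is the absorption probability. A secondary, smaller point: your argument also does not explain why the probability involves only the exponents listed in~\eqref{eq:akbk} and no others; in the paper this is forced by the location of the poles of $1/P$ and $1/Q$, which again comes from the explicit Laplace transform.
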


\begin{proof}
First we assume that $\alpha=n$ and for all $j\in\{1,\dots,2\alpha-2\}$, $\theta-2\delta+j\beta\not\equiv 0\text{ mod}(\pi)$. \nb{Theorem~\ref{thm:laplace} gives an explicit expression of the Laplace transform: one can write $\varphi(x,y)=\frac{L(x,y)}{P(x)Q(y)}$ where $L$ is a polynomial given by~\eqref{eq:L} and perform a partial fraction decomposition of $\varphi$. }
It is \nb{then} possible to invert the Laplace transform. Recall the definition of $P$ and $Q$ given in~\eqref{eq:PQ} and remark that $\{s_2 \boldsymbol{\mathrm{q}}^i : i=0,\dots , \alpha-1 \}=\{s_1 / \boldsymbol{\mathrm{q}}^{i} : i=1,\dots , \alpha \}$ since $s_1=s_2 \mathrm{q}^\alpha$, we obtain $\alpha^2$ constants $\Tilde{c}_{i,j}$ such that
\begin{equation}
f(u,v)=\mathbb P_{(u,v)}(T<\infty)=\sum_{i=1}^{\alpha}\sum_{j=0}^{\alpha-1}\Tilde{c}_{i,j} \exp\Big(\boldsymbol{\mathrm{x}}(s_1 / \boldsymbol{\mathrm{q}}^{i})u+\boldsymbol{\mathrm{y}}(s_1/\boldsymbol{\mathrm{q}}^{j})v\Big).
\label{eq:falpha2}
\end{equation}
Actually, only $2\alpha-1$ of those constants $\Tilde{c}_{i,j}$ are non-zero. More precisely, we are now going to show that if $i\notin\{j,j+1\}$ then $\Tilde{c}_{i,j}= 0$.

Considering~\eqref{eq:falpha2}, the partial differential equation~\eqref{eq:PDE} leads to
$$0=\mathcal{G}f(u,v)=\sum_{i=1}^{\alpha}\sum_{j=0}^{\alpha-1}\Tilde{c}_{i,j}
K\Big(\boldsymbol{\mathrm{x}}(s_1 / \boldsymbol{\mathrm{q}}^{i}),\boldsymbol{\mathrm{y}}(s_1 / \boldsymbol{\mathrm{q}}^{j})\Big)
\exp\Big(\boldsymbol{\mathrm{x}}(s_1 / \boldsymbol{\mathrm{q}}^{i})u+\boldsymbol{\mathrm{y}}(s_1/\boldsymbol{\mathrm{q}}^{j})v\Big).$$
By linear independence of the exponential functions (the coefficients inside the exponentials are all different by~\eqref{eq:doubleroot}),
this implies that $\Tilde{c}_{i,j}=0$ when $K\Big(\boldsymbol{\mathrm{x}}(s_1 / \boldsymbol{\mathrm{q}}^{i}),\boldsymbol{\mathrm{y}}(s_1 / \boldsymbol{\mathrm{q}}^{j})\Big)\ne 0$. By~\eqref{eq:invariance} this must hold for all $(i,j)$ such that 
$s_1 / \boldsymbol{\mathrm{q}}^{i}\notin \ \{s_1 / \boldsymbol{\mathrm{q}}^{j},s_1 / \boldsymbol{\mathrm{q}}^{j+1}\}$
and then for $i\notin\{j,j+1\}$. 

For $i\in \{0,\dots , \alpha \}$ we set the constants $c_{2i}=\Tilde{c}_{i,i}$ and $c_{2i+1}=\Tilde{c}_{i,i-1}$ and we obtain~\eqref{eq:sumexp}. Proposition~\ref{prop:comp2} will give recurrence formulas satisfied by these constants.

Reciprocally, if the absorption probability $f(u,v) $ is a sum of exponentials then $f(0,v)$ is also a sum of exponentials where we denote $m$ the number of distinct exponentials in this sum. We deduce that $\varphi_1(y)$, which is the Laplace transform of $g(0,v)=1-f(0,v)$, is therefore equivalent up to a multiplicative constant to $y^{-m-1}$ when $y\to\infty$. We also know by~\eqref{eq:asympphi} that $\varphi_1(y)$ is equivalent up to a multiplicative constant to $y^{-\alpha-1}$, which implies that $\alpha=m\in\mathbb{N}$.
\end{proof}

In what follows it will be convenient to denote by $k_1^*$ and $k_2^*$ the following functions
\begin{equation}\label{eq:k*}
k_1^*(x,y):=(x,y)\cdot R^1=x-r_1 y,\quad\quad k_2^*(x,y):=(x,y)\cdot R^2=-r_2x+y
\end{equation}
as they naturally appear when applying $\partial_{R^1}$ and $\partial_{R^2}$, see~\eqref{eq:neumann}, to a function of the form $e^{au+bv}$.

\begin{lemma}\label{lemma:k*(s)} The functions $k_1$, $k_2$, $k_1^*$ and $k_2^*$ satisfy the following relations
\begin{equation*}
k_1\circ (\boldsymbol{\mathrm{x}},\boldsymbol{\mathrm{y}})(s)=-\frac{\sigma_{11}}{2} k_1^*\circ (\boldsymbol{\mathrm{x}},\boldsymbol{\mathrm{y}})(\boldsymbol{\mathrm{q}}s^{-1}),\quad\quad k_2\circ (\boldsymbol{\mathrm{x}},\boldsymbol{\mathrm{y}})(s)=-\frac{\sigma_{22}}{2} k_2^*\circ (\boldsymbol{\mathrm{x}},\boldsymbol{\mathrm{y}})(s^{-1}).
\end{equation*}
As a direct consequence we have, for some constants $C_1^*$ and $C_2^*$, for all $s\in\mathbb C^*$,
\begin{equation*}
k_1^*(\boldsymbol{\mathrm{x}}(s),\boldsymbol{\mathrm{y}}(s))=C_1^*\frac{(s-s_0^*)(s-s_1^*)}{s},\quad\quad k_2^*(\boldsymbol{\mathrm{x}}(s),\boldsymbol{\mathrm{y}}(s))=C_2^*\frac{(s-s_0^*)(s-s_2^*)}{s}
\end{equation*}
where $s_0^*:=s_0''^{-1}=\boldsymbol{\mathrm{q}}s_0'^{-1}$, $s_1^*:=\boldsymbol{\mathrm{q}}s_1^{-1}$ and $s_2^*:=s_2^{-1}$.
\end{lemma}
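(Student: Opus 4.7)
The plan is to derive the two identities from Vieta's formulas applied to the kernel $K$ combined with the invariances~\eqref{eq:invariance}; the factorizations of $k_1^*$ and $k_2^*$ will then follow by plugging these identities into Lemma~\ref{lemma:k(s)}.

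By~\eqref{eq:invariance}, the points $(\boldsymbol{\mathrm{x}}(s),\boldsymbol{\mathrm{y}}(s))$ and $(\boldsymbol{\mathrm{x}}(\boldsymbol{\mathrm{q}}s^{-1}),\boldsymbol{\mathrm{y}}(\boldsymbol{\mathrm{q}}s^{-1}))$ both lie on $\mathcal{S}$ and share the same $y$-coordinate $\boldsymbol{\mathrm{y}}(s)$. Hence $\boldsymbol{\mathrm{x}}(s)$ and $\boldsymbol{\mathrm{x}}(\boldsymbol{\mathrm{q}}s^{-1})$ are the two roots of $K(\,\cdot\,,\boldsymbol{\mathrm{y}}(s))=0$ and Vieta gives
\[
\boldsymbol{\mathrm{x}}(s)+\boldsymbol{\mathrm{x}}(\boldsymbol{\mathrm{q}}s^{-1})=-\frac{2(\sigma_{12}\boldsymbol{\mathrm{y}}(s)+\mu_1)}{\sigma_{11}},\qquad \boldsymbol{\mathrm{y}}(s)+\boldsymbol{\mathrm{y}}(s^{-1})=-\frac{2(\sigma_{12}\boldsymbol{\mathrm{x}}(s)+\mu_2)}{\sigma_{22}}.
\]
I would then rewrite $k_1(\boldsymbol{\mathrm{x}}(s),\boldsymbol{\mathrm{y}}(s))=\tfrac{\sigma_{11}}{2}\boldsymbol{\mathrm{x}}(s)+\tfrac{\sigma_{11}r_1}{2}\boldsymbol{\mathrm{y}}(s)+(\sigma_{12}\boldsymbol{\mathrm{y}}(s)+\mu_1)$: substituting the first Vieta identity converts the bracket into $-\tfrac{\sigma_{11}}{2}(\boldsymbol{\mathrm{x}}(s)+\boldsymbol{\mathrm{x}}(\boldsymbol{\mathrm{q}}s^{-1}))$ and, using $\boldsymbol{\mathrm{y}}(s)=\boldsymbol{\mathrm{y}}(\boldsymbol{\mathrm{q}}s^{-1})$, the whole expression collapses to $-\tfrac{\sigma_{11}}{2}k_1^*(\boldsymbol{\mathrm{x}}(\boldsymbol{\mathrm{q}}s^{-1}),\boldsymbol{\mathrm{y}}(\boldsymbol{\mathrm{q}}s^{-1}))$. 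The identity for $k_2$ is obtained in the same way via the second Vieta identity and the invariance $\boldsymbol{\mathrm{x}}(s)=\boldsymbol{\mathrm{x}}(s^{-1})$.

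For the consequence, I would substitute $s\mapsto\boldsymbol{\mathrm{q}}s^{-1}$ in the first identity, yielding $k_1^*(\boldsymbol{\mathrm{x}}(s),\boldsymbol{\mathrm{y}}(s))=-\tfrac{2}{\sigma_{11}}k_1(\boldsymbol{\mathrm{x}}(\boldsymbol{\mathrm{q}}s^{-1}),\boldsymbol{\mathrm{y}}(\boldsymbol{\mathrm{q}}s^{-1}))$, and plug in the factorization of $k_1$ from Lemma~\ref{lemma:k(s)}. The elementary manipulation $(\boldsymbol{\mathrm{q}}s^{-1}-a)=-(a/s)(s-\boldsymbol{\mathrm{q}}/a)$ produces a product form with zeros $\boldsymbol{\mathrm{q}}/s_0'$ and $\boldsymbol{\mathrm{q}}/s_1=s_1^*$; since~\eqref{eq:si} gives directly $s_0'=\boldsymbol{\mathrm{q}}s_0''$, one has $\boldsymbol{\mathrm{q}}/s_0'=1/s_0''=s_0^*$. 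The formula for $k_2^*$ follows symmetrically by substituting $s\mapsto s^{-1}$ into the factorization of $k_2$, which yields the roots $1/s_0''=s_0^*$ and $1/s_2=s_2^*$. The only non-routine step here is the Vieta observation on the two involutions; no real obstacle is expected.
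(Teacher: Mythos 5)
Your proposal is correct and follows essentially the same route as the paper: it first establishes the two relations $k_i\circ(\boldsymbol{\mathrm{x}},\boldsymbol{\mathrm{y}})(s)=-\tfrac{\sigma_{ii}}{2}\,k_i^*\circ(\boldsymbol{\mathrm{x}},\boldsymbol{\mathrm{y}})$ evaluated at the conjugate point of the kernel (the paper phrases this as $k_1(X^+(y),y)=-\tfrac{\sigma_{11}}{2}k_1^*(X^-(y),y)$ and $k_2(x,Y^+(x))=-\tfrac{\sigma_{22}}{2}k_2^*(x,Y^-(x))$, which your Vieta computation verifies), and then deduces the factorizations from Lemma~\ref{lemma:k(s)} by the change of variable $s\mapsto\boldsymbol{\mathrm{q}}s^{-1}$ (resp.\ $s\mapsto s^{-1}$). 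The only difference is that you spell out the "easy verification" via Vieta's formulas in the uniformizing variable, which is a perfectly valid and arguably cleaner way to do it.
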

\begin{proof}
It is equivalent to prove that for all $x$ and $y$, $k_1(X^+(y),y)=-\frac{\sigma_{22}}{2}k_1^*(X^-(y),y)$ and $k_2(x,Y^+(x))=-\frac{\sigma_{11}}{2}k_2^*(x,Y^-(x))$ which can be easily verified using the definitions~\eqref{eq:Ypm}, \eqref{eq:kernel} and~\eqref{eq:k*}. Then, Lemma~\ref{lemma:k(s)} allows us to conclude. See Figure~\ref{fig:inv} for a geometric interpretation.
\end{proof}

The following proposition establishes a recurrence relationship which allows to compute $(a_k,b_k)$. It gives a very natural geometric interpretation of this sequence of points which belongs to the ellipse $\mathcal{E}:=\mathcal{S}\cap\mathbb{R}^2=\{(x,y)\in\mathbb{R}^2 :K(x,y)=0 \}$, starts at the intersection with the line $\{k_1^*=0\}$ and ends at the intersection with the line $\{k_2^*=0\}$. It can be visualized in Figure~\ref{fig:comp}. 

\begin{proposition}[Recursive relationship of the sequence $(a_k,b_k)$]
The sequence $(a_k,b_k)\in\mathcal{E}$ defined in~\eqref{eq:akbk} satisfies the following relations
\begin{equation*}
(a_{2k+1},b_{2k+1})=\left(\frac{b_{2k}}{a_{2k}}\cdot \frac{\sigma_{22}b_{2k}+2\mu_2}{\sigma_{11}},b_{2k}\right),\quad (a_{2k+2},b_{2k+2})=\left(a_{2k+1},\frac{a_{2k+1}}{b_{2k+1}}\cdot \frac{\sigma_{11}a_{2k+1}+2\mu_1}{\sigma_{22}}\right)
\end{equation*}
where $b_1=\boldsymbol{\mathrm{y}}\left(s_1\right)=\boldsymbol{\mathrm{y}}\left(s_1^*\right)$  was computed in~\eqref{eq:b1}, $a_1=\boldsymbol{\mathrm{x}}\left(s_1/\boldsymbol{\mathrm{q}} \right)=\boldsymbol{\mathrm{x}}\left(s_1^* \right)=r_1b_1$ and $k_1^*(a_1,b_1)=0$. We also have $k_2^*(a_{2\alpha-1},b_{2\alpha-1})=0$ and one can easily compute 
$$
a_{2\alpha-1}=\boldsymbol{\mathrm{x}}\left(s_2^*\right)=-\frac{2(\mu_1+r_2\mu_2)}{\sigma_{11}+\sigma_{22}r_2^2+2\sigma_{12}r_2}\text{ and }b_{2\alpha-1}=\boldsymbol{\mathrm{y}}\left(s_2^*\right)=r_2a_{2\alpha-1}.
$$
\label{rem:recakbk}
\end{proposition}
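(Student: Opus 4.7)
The plan is to establish both the two-step recursion and the boundary conditions by exploiting two ingredients: the uniformization invariances~\eqref{eq:invariance} (which give a geometric picture of the sequence as a "staircase" on the ellipse $\mathcal{E}$) and Vieta's formulas applied to the quadratic kernel $K$.

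First, I would verify that the sequence is geometrically a zigzag on $\mathcal{E}$, moving alternately horizontally and vertically. Using $\boldsymbol{\mathrm{y}}(s)=\boldsymbol{\mathrm{y}}(\boldsymbol{\mathrm{q}}s^{-1})$ we get $\boldsymbol{\mathrm{y}}(s_1/\boldsymbol{\mathrm{q}}^k)=\boldsymbol{\mathrm{y}}(\boldsymbol{\mathrm{q}}^{k+1}/s_1)$, so $b_{2k+1}=b_{2k}$; that is, $(a_{2k},b_{2k})$ and $(a_{2k+1},b_{2k+1})$ are the two points of $\mathcal{S}$ with ordinate $b_{2k}$. Symmetrically, via $\boldsymbol{\mathrm{x}}(s)=\boldsymbol{\mathrm{x}}(s^{-1})$, we obtain $a_{2k+2}=a_{2k+1}$, so these two points share the same abscissa. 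Solving $K(x,b_{2k})=0$ is a quadratic in $x$ with roots $a_{2k}$ and $a_{2k+1}$; Vieta gives $a_{2k}a_{2k+1}=\frac{b_{2k}(\sigma_{22}b_{2k}+2\mu_2)}{\sigma_{11}}$, which yields the first recursion. The second recursion follows symmetrically from the quadratic $K(a_{2k+1},y)=0$ in $y$.

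Next, for the initial condition, I would use Lemma~\ref{lemma:k*(s)}: since $(a_1,b_1)=(\boldsymbol{\mathrm{x}}(s_1^*),\boldsymbol{\mathrm{y}}(s_1^*))$ and $k_1^*(\boldsymbol{\mathrm{x}}(s),\boldsymbol{\mathrm{y}}(s))$ vanishes precisely at $s\in\{s_0^*,s_1^*\}$, one gets $k_1^*(a_1,b_1)=0$, i.e. $a_1=r_1b_1$, with $b_1$ given by~\eqref{eq:b1}. One also checks that $a_1=\boldsymbol{\mathrm{x}}(s_1^*)=\boldsymbol{\mathrm{x}}(s_1/\boldsymbol{\mathrm{q}})$ using the invariance of $\boldsymbol{\mathrm{x}}$. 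For the terminal condition, note that when $\alpha=n\in\mathbb{N}$ the argument in the proof of Proposition~\ref{prop:decoupl} gives $s_1=\boldsymbol{\mathrm{q}}^{\alpha}s_2$ (case $d=\alpha+1$, $r=0$). Therefore $s_1/\boldsymbol{\mathrm{q}}^{\alpha}=s_2$ and, via the invariances, $(a_{2\alpha-1},b_{2\alpha-1})=(\boldsymbol{\mathrm{x}}(s_2),\boldsymbol{\mathrm{y}}(\boldsymbol{\mathrm{q}}s_2))=(\boldsymbol{\mathrm{x}}(s_2^*),\boldsymbol{\mathrm{y}}(s_2^*))$. Applying Lemma~\ref{lemma:k*(s)} again yields $k_2^*(a_{2\alpha-1},b_{2\alpha-1})=0$, i.e. $b_{2\alpha-1}=r_2 a_{2\alpha-1}$.

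Finally, I would derive the explicit formula for $a_{2\alpha-1}$ by substituting $y=r_2 x$ into $K(x,y)=0$: this reduces to $x\bigl[\tfrac{x}{2}(\sigma_{11}+\sigma_{22}r_2^2+2\sigma_{12}r_2)+\mu_1+r_2\mu_2\bigr]=0$, whose nonzero root is the claimed value. The computation is entirely mechanical; the only potential obstacle is bookkeeping the two distinct invariances $\boldsymbol{\mathrm{x}}(s)=\boldsymbol{\mathrm{x}}(s^{-1})$ and $\boldsymbol{\mathrm{y}}(s)=\boldsymbol{\mathrm{y}}(\boldsymbol{\mathrm{q}}s^{-1})$ when translating the parameter identities $s_1/\boldsymbol{\mathrm{q}}^k \leftrightarrow s_1/\boldsymbol{\mathrm{q}}^{k+1}$ into the "same-ordinate" and "same-abscissa" relations between consecutive $(a_k,b_k)$. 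Once this is done cleanly, Vieta's formulas and Lemma~\ref{lemma:k*(s)} deliver everything.
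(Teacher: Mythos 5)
Your proposal is correct and follows essentially the same route as the paper: the two-step recursion comes from Vieta's formula applied to the quadratics $K(\cdot,b_{2k})=0$ and $K(a_{2k+1},\cdot)=0$ after using the invariances~\eqref{eq:invariance} to place the zigzag points on $\mathcal{S}$, and the initial/terminal identities follow from Lemma~\ref{lemma:k*(s)} evaluated at $s_1^*$ and $s_2^*$. You simply spell out the boundary computations (which the paper leaves as ``one can easily compute'') in more detail.
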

\begin{proof} By definition~\eqref{eq:akbk} we have $b_{2k}=b_{2k+1}$ and $a_{2k}\neq a_{2k+1}$. Furthermore $K(a_{2k},b_{2k})=K(a_{2k+1},b_{2k+1})=0$, then $a_{2k}$ and $a_{2k+1}$ must be the two distinct roots of the quadratic equation $K(x,b_{2k})=0$. Vieta's formula gives the value of the product of those roots in terms of the coefficients of the equation and we get the relation for $(a_{2k+1},b_{2k+1})$. The same method applies for the second relation about $(a_{2k+2},b_{2k+2})$.
\end{proof}

\begin{figure}[b]
\centering
\includegraphics[width=0.8\linewidth]{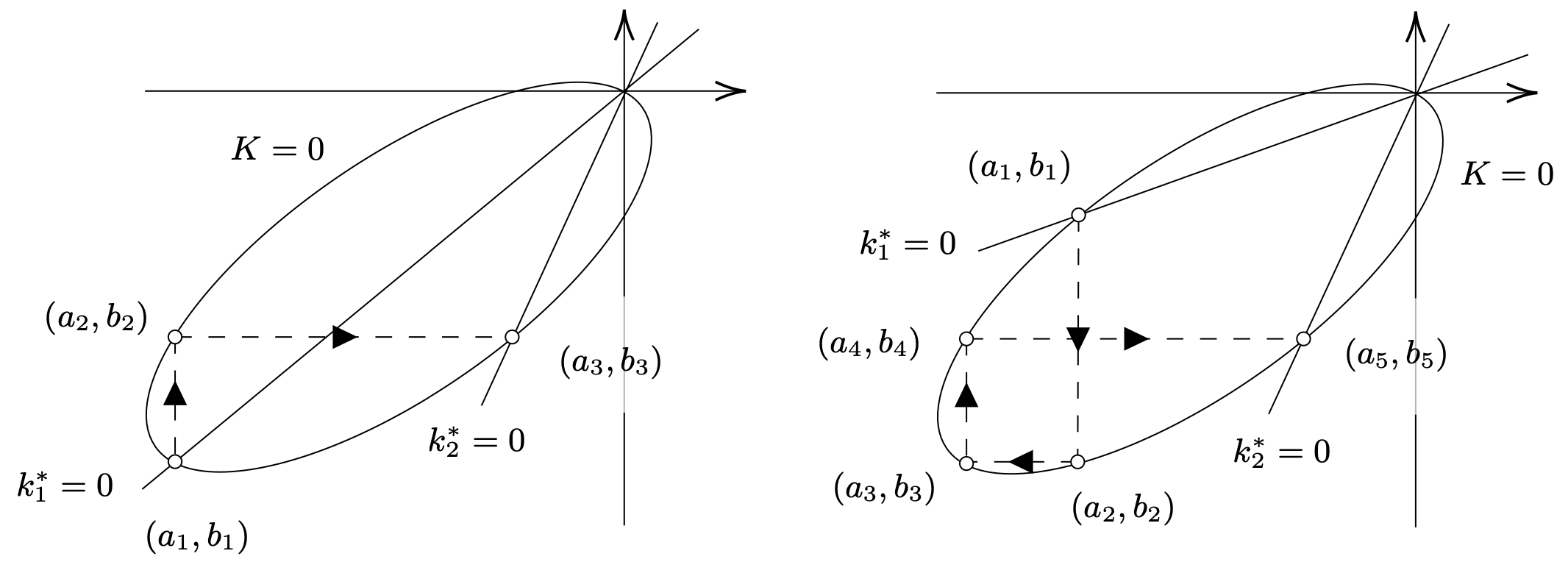}
\caption{Construction of the finite sequences $\{(a_i,b_i):1\leqslant i \leqslant 2\alpha -1\}$. On the left $\alpha =2$ while $\alpha=3$ on the right.}
\label{fig:comp}
\end{figure}

The aim is now to compute explicitly the coefficients $c_k$ which appear in Theorem~\ref{thm:explicitproba}.




\begin{proposition}[Recursive relationship of the sequence $c_k$]\label{prop:comp2}
We assume $\alpha\in\mathbb{N}$ and condition~\eqref{eq:doubleroot}. We recall that the constants $a_k$, $b_k$ are defined in~\eqref{eq:akbk}. The constants $c_k$ introduced in Theorem~\ref{thm:explicitproba} are determined by the recurrence relations
\begin{equation*}
c_{2k}=-c_{2k-1}\frac{k_2^*(a_{2k-1},b_{2k-1})}{k_2^*(a_{2k},b_{2k})}\text{ and }c_{2k+1}=-c_{2k}\frac{k_1^*(a_{2k},b_{2k})}{k_1^*(a_{2k+1},b_{2k+1})}
\end{equation*}
and the normalization relationship $\sum_{k=1}^{2\alpha-1} c_k=1$. 
\end{proposition}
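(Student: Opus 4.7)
The plan is to impose on the ansatz~\eqref{eq:sumexp} the two Neumann conditions from~\eqref{eq:neumann} and the value $f(0,0)=1$ (which have not yet been used in the proof of Theorem~\ref{thm:explicitproba}), and then read off the recurrences via linear independence of exponentials. The key observation is that the oblique derivatives act as multiplication on a plane wave: by the definitions~\eqref{eq:k*},
\begin{equation*}
\partial_{R^1}\bigl(e^{au+bv}\bigr)=(a-r_1 b)\,e^{au+bv}=k_1^*(a,b)\,e^{au+bv},\qquad
\partial_{R^2}\bigl(e^{au+bv}\bigr)=k_2^*(a,b)\,e^{au+bv}.
\end{equation*}
The interior PDE $\mathcal{G}f=0$ from~\eqref{eq:PDE} is automatic, since $(a_k,b_k)\in\mathcal{S}$ by the construction of the exponents in~\eqref{eq:akbk}.

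Substituting~\eqref{eq:sumexp} into $\partial_{R^1}f(0,v)=0$ gives $\sum_{k=1}^{2\alpha-1}c_k\,k_1^*(a_k,b_k)\,e^{b_k v}=0$ for all $v\geqslant 0$. Proposition~\ref{rem:recakbk} ensures $a_1=r_1 b_1$, hence $k_1^*(a_1,b_1)=0$ and the $c_1$ term disappears; by~\eqref{eq:akbk} the remaining frequencies group as $b_{2k}=b_{2k+1}$ for $k=1,\dots,\alpha-1$, and these $\alpha-1$ values are pairwise distinct under~\eqref{eq:doubleroot}. Linear independence of distinct exponentials then forces, for each such $k$, the relation $c_{2k}\,k_1^*(a_{2k},b_{2k})+c_{2k+1}\,k_1^*(a_{2k+1},b_{2k+1})=0$, which is the second recurrence. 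Symmetrically, $\partial_{R^2}f(u,0)=0$ yields $\sum_k c_k\,k_2^*(a_k,b_k)\,e^{a_k u}=0$; this time $k_2^*(a_{2\alpha-1},b_{2\alpha-1})=0$ by Proposition~\ref{rem:recakbk} kills the top term, and pairing $a_{2k-1}=a_{2k}$ with linear independence delivers $c_{2k-1}\,k_2^*(a_{2k-1},b_{2k-1})+c_{2k}\,k_2^*(a_{2k},b_{2k})=0$, which is the first recurrence. Finally, evaluating~\eqref{eq:sumexp} at the vertex gives $\sum_{k=1}^{2\alpha-1} c_k=f(0,0)=\mathbb{P}_{(0,0)}(T<\infty)=1$, using $g(0,0)=0$ from~\eqref{eq:limits}; this is the normalization.

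The only step that requires genuine verification is the pairwise distinctness of the frequencies $\{b_{2k}\}_{k=1}^{\alpha-1}$ and $\{a_{2k-1}\}_{k=1}^{\alpha-1}$ needed to apply linear independence of exponentials on the half-line. This amounts to showing $\boldsymbol{\mathrm{y}}(s_1/\boldsymbol{\mathrm{q}}^i)\neq\boldsymbol{\mathrm{y}}(s_1/\boldsymbol{\mathrm{q}}^j)$ for $i\neq j$ in $\{1,\dots,\alpha-1\}$ (and the analogous statement for $\boldsymbol{\mathrm{x}}$); via the invariances~\eqref{eq:invariance} this reduces to $\boldsymbol{\mathrm{q}}^{i-j}\neq 1$ and $s_1^2\neq\boldsymbol{\mathrm{q}}^{i+j+1}$, both of which follow from the simple-root condition~\eqref{eq:doubleroot} exactly as in the proof of Lemma~\ref{lemma:double}. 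Once this is checked, the rest is pure substitution, which is why the three relations suffice to determine all $2\alpha-1$ unknowns $c_1,\dots,c_{2\alpha-1}$.
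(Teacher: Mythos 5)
Your proposal is correct and follows essentially the same route as the paper: substitute the sum-of-exponentials into each oblique Neumann condition, note that $k_1^*(a_1,b_1)=0$ and $k_2^*(a_{2\alpha-1},b_{2\alpha-1})=0$ kill the unpaired terms, group the remaining terms by equal frequencies, and conclude by linear independence of distinct exponentials under the simple-root condition, with the normalization coming from $f(0,0)=1$. Your explicit check that the frequencies are pairwise distinct is a slightly more careful rendering of a point the paper states without detail, but it is not a different argument.
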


\begin{proof}
For the first relation, let us observe that 
$$\partial_{R^2}\exp(au+bv)|_{v=0}=k_2^*(a,b)\exp(au).$$
We denote $f(u,v)= \mathbb{P}_{(u,v)}(T<\infty)$. Using Theorem \ref{thm:explicitproba}, noticing that $a_{2k-1}=a_{2k}$, we evaluate $\partial_{R^2}f$ at $v=0$ and the Neumann condition~\eqref{eq:neumann} gives
\begin{equation}\label{eq:eq1}
0=\sum_{k=1}^{\alpha-1} \Big(c_{2k-1}k_2^*(a_{2k-1},b_{2k-1})+c_{2k}k_2^*(a_{2k},b_{2k})\Big)\exp(a_{2k}u)+k_2^*(a_{2\alpha-1},b_{2\alpha-1})\exp(a_{2\alpha-1}u).
\end{equation}
By Lemma~\ref{lemma:k*(s)} we see that
$k_2^*(a_{2\alpha-1},b_{2\alpha-1})=k_2^*\circ(\boldsymbol{\mathrm{x}},\boldsymbol{\mathrm{y}})\left(s_2^*\right)=0$
so that the last term in \eqref{eq:eq1} is zero. Under the simple roots condition~\eqref{eq:doubleroot}, $a_{2i}\ne a_{2j}$ for all $i\ne j$, the family $\{u\mapsto \exp(a_{2k}u)\}$ is therefore linearly independent and for all $k$ we obtain
\begin{equation*}
c_{2k-1}k_2^*(a_{2k-1},b_{2k-1})+c_{2k}k_2^*(a_{2k},b_{2k})=0.
\end{equation*}
The proof of the second relation is similar. The normalization comes from the fact that $f(0,0)=1$.

\end{proof}

The following paragraph aims to give a geometric interpretation to all the coefficients $a_k$, $b_k$ and $c_k$ and to explain the compensation phenomenon which appears in the sum of exponentials.

\paragraph{Heuristic of the compensation approach}

Using a recursive compensation method (with a finite number of iterations), it is possible to find a solution to the partial differential equation stated in~\eqref{eq:PDE} and~\eqref{eq:neumann} that is a candidate for being the probability of absorption at the vertex. It is interesting to remark that the positivity of this solution is by no means obvious and that the uniqueness of the solution of this kind of PDE usually requires the positivity of the solution. 

In this paragraph, we explain the compensation phenomenon. By using an analytic approach, we showed in Theorem~\ref{thm:explicitproba} that when $\alpha\in\mathbb{N}$ and~\eqref{eq:doubleroot} holds the absorption probability is
\begin{equation*}
    f(u,v)= \mathbb{P}_{(u,v)}(T<\infty)=\sum_{k=1}^{2\alpha-1} c_k\exp(a_ku+b_kv),
\end{equation*}
where the $(a_k,b_k)$ are determined in Proposition~\ref{rem:recakbk} and the $c_k$ in Proposition~\ref{prop:comp2}.


We define the following function vector spaces 
$$
E_0:=\{ h\in\mathcal{C}^2(\mathbb R_+^2):\;\mathcal{G} h=0 \text{ on }\mathbb R_+^2 \},
$$
$$
E_1:=\{h\in\mathcal{C}^2(\mathbb R_+^2):\;\partial_{R^1}h(0,\,\cdot\,)=0\text{ on }\mathbb R_+\}\text{ and }\; E_2:=\{h\in\mathcal{C}^2(\mathbb R_+^2):\;\partial_{R^2}h(\,\cdot\,,0)=0\text{ on }\mathbb R_+\}.
$$
One may remark that a function $h$ satisfies the PDE~\eqref{eq:PDE} if and only if $h\in E_0$ and $h$ satisfy the Neumann boundary conditions~\eqref{eq:neumann} if and only if $h\in E_1\cap E_2$.
Furthermore, the function $(u,v)\mapsto e^{au+bv}$ belong to $E_0$ if and only if $K(a,b)=0$, belongs to $E_1$ if and only if $k_1^*(a,b)=0$, and belongs to $E_2$ if and only if $k_2^*(a,b)=0$.

By Proposition~\ref{rem:recakbk} all the $(a_k,b_k)$ are on the ellipse $\mathcal{E}$ defined by $K=0$, it is then easy to understand why $f\in E_0$, \textit{i.e.} why $f$ satisfies the partial differential equation~\eqref{eq:PDE}. 

We are now seeking to understand why the coefficients $c_k$ given in Proposition~\ref{prop:comp2} ensure that $f\in E_1\cap E_2$, \textit{i.e.} why $f$ satisfies the Neumann boundary conditions~\eqref{eq:neumann}. 
In fact, the $c_k$ have been chosen such that by grouping the terms of the sum by pairs (except the first or the last term) they \textit{compensate} each other to ensure the inclusions in $E_1$ and $E_2$:
\begin{align*}
f(u,v)&=\underbrace{c_1\exp(a_1u+b_1v)}_{\displaystyle\in E_1} + \sum_{k=1}^{\alpha-1} \underbrace{c_{2k}\exp(a_{2k}u+b_{2k}v)+c_{2k+1}\exp(a_{2k+1}u+b_{2k}v)}_{\displaystyle\in E_1} \\
&=\sum_{k=1}^{\alpha-1} \underbrace{c_{2k-1}\exp(a_{2k}u+b_{2k-1}v)+c_{2k}\exp(a_{2k}u+b_{2k}v)}_{\displaystyle\in E_2} + \underbrace{c_{2\alpha-1}\exp(a_{2\alpha-1}u+b_{2\alpha-1}v)}_{\displaystyle\in E_2}
\end{align*}
so that $f\in E_1\cap E_2$. This is due to the fact that $(c_{2k}k_1^*(a_{2k},b_{2k})+c_{2k+1}k_1^*(a_{2k+1},b_{2k}))e^{b_{2k}v}=0$ and $(c_{2k-1}k_2^*(a_{2k},b_{2k-1})+c_{2k}k_2^*(a_{2k},b_{2k}))e^{a_{2k}u}=0$. 

We now understand the phenomenon of compensation which explains why $f$ is a solution of the partial differential equation~\eqref{eq:PDE} with Neumann boundary conditions~\eqref{eq:neumann}. One may also verify that the limit conditions~\eqref{eq:limits} are also satisfied. Let us note, on the other hand, that the positivity of this function is absolutely not obvious to check.

\paragraph{Double roots.}
This last paragraph deals with the case where $P$ or $Q$ have double roots, \textit{i.e.} when for some integer $j\in\{1,\dots,2\alpha-2\}$, $\theta-2\delta+j\beta\equiv 0\text{ mod}(\pi)$, see Lemma~\ref{lemma:double}.
The number of cases to handle to give a general explicit formula is too big. Nonetheless, we can give the general shape of the absorption probability: if $\alpha\in\mathbb N$, the absorption probability can be written as 
$$\mathbb P_{(u,v)}(T<\infty)=\sum_{k=0}^{2\alpha-1}A_k(u,v)\exp(a_ku+b_kv)$$
where $a_k$ and $b_k$ are given in Equation \eqref{eq:akbk} and the $A_k$ are affine functions of $u$ and $v$. Indeed, Theorem~\ref{thm:laplace} holds even when there are multiple roots. Inverting the Laplace transform we show that the absorption probability can be written 
$\sum_{k=0}^{2\alpha-1}A_k(u,v)\exp(a_ku+b_kv)$
where the $A_k$ are polynomials. A direct calculation shows that $P$ and $Q$ can't have triple roots. This proves that the total degree of $A_k$ is less than $1$ for all $k$. We can also give an intuitive explanation for the fact that there are no triple roots: the geometric interpretation tells us that the sequence $(a_k,b_k)$ cannot visit a point thrice, otherwise it would loop indefinitely.\\

The case where $\alpha=2$ is completely solved below as an example.
\begin{example}[Double roots, $\alpha=2$] 
For $\alpha=2$, we distinguish two cases with double roots 
\begin{itemize}
\item if $\theta - 2 \delta +\beta = - \pi$ then
\begin{equation*}
\mathbb P_{(u,v)}(T<\infty)= \Big(1+c\Big)\exp\Big(\boldsymbol{\mathrm{x}}(s_2)u+\boldsymbol{\mathrm{y}}(s_1)v\Big)-\Big(\boldsymbol{\mathrm{x}}(s_2)u+c\Big)\exp\Big(\boldsymbol{\mathrm{x}}(s_2)u+\boldsymbol{\mathrm{y}}(s_1/\boldsymbol{\mathrm{q}})v\Big)
\end{equation*}
\item if $\theta - 2 \delta +2\beta = - \pi$ then
\begin{equation*}
\mathbb P_{(u,v)}(T<\infty)=-\Big(\boldsymbol{\mathrm{y}}(s_1)v+c\Big)\exp\Big(\boldsymbol{\mathrm{x}}(s_2)u+\boldsymbol{\mathrm{y}}(s_1)v\Big)+\Big(1+c\Big)\exp\Big(\boldsymbol{\mathrm{x}}(s_2\boldsymbol{\mathrm{q}})u+\boldsymbol{\mathrm{y}}(s_1)v\Big)
\end{equation*}
\end{itemize}


where 
$c=\displaystyle\frac{1}{r_1r_2-1}$.
\end{example}

\subsection*{Acknowledgments} 
This project has received funding from Agence Nationale de la Recherche, ANR JCJC programme under the Grant Agreement ANR-22-CE40-0002.
\nb{The authors would like to thank the anonymous referee for his/her helpful comment on the irreducibility of the kernel.}

\bibliographystyle{apalike}

\newpage



\end{document}